\documentclass[11pt]{amsart}
\usepackage{amsmath}
\usepackage{amsthm}
\usepackage{amsfonts}
\usepackage{amssymb}
\usepackage{verbatim}
\usepackage{graphicx}
\usepackage[margin=1.0in]{geometry}
\usepackage{url}
\usepackage{enumerate}
\usepackage[pdftex,bookmarks=true]{hyperref}

\newcommand\R{{\mathbb{R}}}

\renewcommand\P{{\mathbf{P}}}
\newcommand\E{{\mathbf{E}}}


%


\newcommand\CB{{\mathcal B}}

\newcommand\CE{{\mathcal E}}






\newcommand\ep{{\epsilon}}

\newcommand{\ab}[1]{\left| #1 \right|}


\parindent = 0 pt
\parskip = 12 pt

\theoremstyle{plain}
  \newtheorem{theorem}{Theorem}

  \newtheorem{lemma}[theorem]{Lemma}
  
  \newtheorem{claim}[theorem]{Claim}

\theoremstyle{definition}

  \newtheorem{remark}[theorem]{Remark}

\include{psfig}

\begin{document}

\title {On the number of real roots of  random polynomials}

\author{Hoi Nguyen} 
\address{Department of Mathematics, The Ohio State University, 231 West 18th Avenue, Columbus, OH 43210}
\email{nguyen.1261@math.osu.edu}
\thanks{H. Nguyen is supported by research grant DMS-1358648}

\author{Oanh Nguyen}  
\address{Department of Mathematics, Yale University, New Haven, CT 06520, USA}
\email{oanh.nguyen@yale.edu}

\author{Van Vu}
\thanks{O. Nguyen and V. Vu is supported by research grants DMS-0901216 and AFOSAR-FA-9550-09-1-0167. }
\address{Department of Mathematics, Yale University, New Haven , CT 06520, USA}
\email{van.vu@yale.edu}

\begin{abstract} 
Roots of random  polynomials have been studied exclusively in both analysis and probability for a long time. 
A famous result by Ibragimov and Maslova, generalizing earlier  fundamental works of Kac and Erd\H os-Offord, showed that the expectation of the number of real roots  is  $\frac{2}{\pi} \log n + o(\log n)$.   In this paper,  we determine the true nature of the error term by showing that 
the expectation equals  $\frac{2}{\pi}\log n + O(1)$. Prior to this paper, such estimate has been known only in the gaussian case, thanks to works of 
 Edelman and Kostlan. 
\end{abstract}

\maketitle

\section{Introduction}
Consider a random polynomial $P_{n, \xi}  (z) = \sum_{i=0}^n  \xi_i x^i$ where $\xi_i$ are iid copies of a real random variable $\xi$ with mean zero.
Let $N_{n, \xi} $ denote the number of real roots of $P_{n, \xi}$. In what follows the asymptotic notations are used under the assumption that 
$n \rightarrow \infty$; notation such as $O_k (1)$ means that the hidden constant in big "O"  may depend on a given parameter $k$.

Waring was the first to investigate  roots of random polynomials as far back as 1782 (see, for instance,  Todhunter's book on early  history of probability \cite[page 618]{To}, which also mentioned 
a similar contribution of Sylvester).  As customary in those old days, the source of randomness was not specified in these works.   More rigorous and 
systematic studies of $N_{n, \xi}$ started in the 1930s. 
In 1932,  Bloch and P\'olya \cite{BP} considered the special case when $\xi$ is  uniformly distributed in $\{-1,0,1\}$ and established the upper bound
$$ \E N_{n, \xi}  = O( n^{1/2}).$$  Their method can be extended to other discrete distributions such as Bernoulli ($\xi =\pm 1$ with probability $1/2$); see \cite{erd}. 
  This bound is not sharp,  and it was a considerable surprise when Littlewood and Offord showed that random  polynomials actually have a remarkably small number of real zeroes.  
 In a  series of  fundamental  papers \cite{lo-2,lo-3, lo-4}, published between  1939 and 1945,  they  proved  a strong bound 
\begin{equation}  \label{Kac1}  \frac{\log n}{\log \log \log n} \ll N_{n, \xi}  \ll \log^2 n \end{equation} 
with probability $1-o(1)$,  for many basic variables  $\xi$ (such as Bernoulli, Gaussian, and uniform on $[-1,1]$). 

During this time, in 1943, another fundamental  result was achieved by  Kac  \cite{kac-0}, who  found an asymptotic  estimate for $\E N_{n, \xi}$ in the case that $\xi$ is standard real Gaussian $N(0,1)$, showing 
\begin{equation}  \label{Kac2}  \E N_{n, N(0,1)} = \left(\frac{2}{\pi} +o(1)\right) \log n.  \end{equation}

It took much effort   to extend \eqref{Kac2} to other distributions.  Kac's method does provide a  formula for $\E N_{n, \xi}$ for any $\xi$. However, 
this formula is  hard   to estimate when $\xi$ is not Gaussian.  In a subsequent paper \cite{kac-1}, Kac managed to extend  \eqref{Kac2} 
to the case when $\xi$ is uniform  on $[-1,1]$ and  Stevens \cite{Stev}  extended it further  to cover  a large class  of  $\xi$ having continuous and smooth  distributions
with certain  regularity properties (see \cite[page 457]{Stev} for details). These papers
relied  on Kac's formula and the analytic properties of the distribution of $\xi$ are essential.  
({\it A historical remark} : In \cite{kac-0}, Kac was very optimistic and thought that his argument would work for all random variables. However, 
he soon realized that it was not the case, and his proof for the  uniform case was already substantially more complicated than 
that of the gaussian case; see \cite{kac-1}.)

For random variables with no analytic properties, it is a completely different ball game. 
 Since Kac's paper, it  took sometime until Erd\H{o}s and Offord  in 1956 \cite{EO} found a 
 new approach  to handle 
discrete distributions.  Considering the case when $\xi$ is Bernoulli, they proved that with probability $1- o(\frac{1} {\sqrt {\log \log n} } )$

\begin{equation} 
N_{n, \xi}  = \frac{2}{\pi} \log n + o(\log^{2/3} n \log \log n). 
\end{equation} 

Erd\H os often listed this result among his favorites achievements (see, for instance \cite{erdossurvey}). 
In   late 1960s and early 1970s,  Ibragimov and Maslova \cite{IM1, IM2}  successfully refined  Erd\H{o}s-Offord method to handle 
any variable $\xi$ with mean 0. They  proved  that for any $\xi$ with mean zero which belong to the domain of attraction of the normal law,

\begin{equation}  \label{IM-1} 
\E  N_{n, \xi}  = \frac{2}{\pi} \log n + o(\log n) .
\end{equation} 

The error term $o(\log n)$ is implicit in their papers. However, 
by following the proof (see the last bound in \cite[page 247]{IM1}) it seems that one  can replace it by a more precise term $O( \log^{1/2} n \log \log n )$. 
For related results, see also \cite{IM3, IM4}. 
Few years later, Maslova  \cite{Mas1, Mas2} showed that if $\xi$ has mean zero and variance one and $\P (\xi =0) =0$, then the variance of $N_{n, \xi}$ is 
$(\frac{4}{\pi} (1- \frac{2}{\pi} )+o(1) ) \log n$.

Fast forwarding twenty more years, one records  another important development,  made by Edelman and Kostlan \cite{EK}  in 1995. They 
 introduced a new way to handle the Gaussian case and  estimate $\E N_{n, N(0,1)} $. 
Using delicate analytical tools, they  proved  the following  stunningly precise formula

\begin{equation}    \label{Kac6} 
  \E N_{n, N(0,1) }  =  \frac{2}{\pi} \log n  + C_{N(0,1)} +  \frac{2}{\pi n} +O(\frac{1}{n^2}) 
\end{equation}

\noindent where $C_{N(0,1)} \approx .625738072..$ is an explicit  constant ( it is the value of an explicit, but complicated, integral).

The approach used in \cite{EK} relies  critically   on the fact that a random Gaussian vector distributes uniformly on the unit sphere and  cannot be used 
 for other distributions.  The true nature of the error term in $\E (N_{n, \xi}) $  has not been known  in general and all  of the existing approaches  lead to 
 error term polynomial in $\log n$ . In particular,  it seems  already  very difficult to 
improve upon  the order of magnitude of the error term in Ibragimov and Maslova's analysis.

In this paper, we provide a new method to estimate $\E N_{n, \xi}$.  This method enables us to derive the following sharp estimate

\begin{theorem} \label{theorem:main}  For any random variable $\xi$ with mean 0 and variance 1 and bounded $(2 +\epsilon)$-moment
$$ \E N_{n, \xi}  =\frac{2}{\pi} \log n +O_{\epsilon, \xi} (1). $$
\end{theorem}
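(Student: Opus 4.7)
The plan is to reduce to the Gaussian case by a universality argument and then invoke Edelman and Kostlan's formula \eqref{Kac6}. By the substitution $x \mapsto 1/x$, which sends $P_{n,\xi}(x)$ to $x^{n} P_{n,\xi^{\mathrm{rev}}}(1/x)$ and preserves the law of the coefficient sequence, together with $x \mapsto -x$, it suffices to estimate the expected number of real roots in $[0,1]$. Partition this interval into a \emph{far} region $I_1=[0,\,1-C\log n/n]$, a \emph{near} region $I_2=[1-C\log n/n,\,1-c/n]$, and a \emph{boundary} region $I_3=[1-c/n,\,1]$, for suitable constants $c,C$. The overall goal is to show that the contribution of $I_1\cup I_3$ is $O(1)$ using only anti-concentration, and that the contribution of $I_2$ agrees with that of the Gaussian polynomial $P_{n,g}$ up to $O(1)$.

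\textbf{Peripheral regions.} In $I_1$, whenever $|x|\le 1-c\log n/n$, the partial sums $\sum \xi_i x^i$ exhibit fast geometric decay in $i$, so an argument in the spirit of Bloch-P\'olya and Erd\H{o}s-Offord, combined with a standard Kolmogorov-Rogozin small-ball bound using only the second moment of $\xi$, bounds the expected number of sign changes on any dyadic sub-interval by an absolute constant; the total contribution over the dyadic decomposition of $I_1$ is $O(1)$. In $I_3$, near $x=1$ the polynomial is essentially $P_{n,\xi}(1)+(x-1)P'_{n,\xi}(1)+\cdots$, so the expected number of zeros inside an interval of length $c/n$ is controlled by a small-ball estimate for the pair $(P_{n,\xi}(1), P'_{n,\xi}(1))$, which is uniformly bounded thanks to the $(2+\epsilon)$-moment and the large number of independent summands at $x=1$. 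Together these give $\E N_{n,\xi}(I_1\cup I_3)=O_{\epsilon,\xi}(1)$ without using any fine information about $\xi$.

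\textbf{Universality on $I_2$.} The new ingredient is a Lindeberg-type comparison showing $\E N_{n,\xi}(I_2)=\E N_{n,g}(I_2)+O_{\epsilon,\xi}(1)$. I would realize the root-counting functional on $I_2$ as an expectation of a smooth test function: on a mesh $\{t_j\}$ spanning $I_2$ with local spacing calibrated to the anti-concentration scale of $P_{n,\xi}(t_j)$, count a sign change whenever $P_{n,\xi}(t_j)P_{n,\xi}(t_{j+1})<0$, using a $C^4$ approximant of $\mathbf 1_{xy<0}$. Then swap $\xi_i\leftrightarrow g_i$ coordinate by coordinate. Matching of the first two moments of $\xi$ and $g$ kills the leading terms in a Taylor expansion of the functional in the $i$-th coordinate; the third- and fourth-order remainders are controlled by $\E|\xi|^{2+\epsilon}$ together with local anti-concentration bounds for $(P_{n,\xi}(t_j),P'_{n,\xi}(t_j))$. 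The goal of the analysis is a per-coordinate swap cost of order $n^{-1-\delta}$, so that summing over the $n+1$ coordinates and over the mesh still gives total error $O(1)$. Combining this with \eqref{Kac6} applied to $\E N_{n,g}(I_2)$ (and with the peripheral bounds, which also hold for $g$) completes the proof.

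\textbf{Main obstacle.} The delicate point is getting a genuinely $O(1)$, rather than $\operatorname{polylog}(n)$, error from the swap. The difficulty is that near $x = 1-t/n$, only about $n/t$ coefficients effectively contribute to $P_{n,\xi}(x)$, so the pointwise Gaussian approximation degenerates as $x\uparrow 1$ and the swap ``budget'' per coordinate must be tightened exactly in step with the local effective dimension. This forces a sharp anti-concentration bound for the pair $(P_{n,\xi}(x),P'_{n,\xi}(x))$ that is uniform over $x\in I_2$ and at the correct scale; I expect that establishing such a bound from nothing more than the $(2+\epsilon)$-moment hypothesis, and pairing it with a mesh whose spacing is tuned to this scale, is the technical heart of the paper and the reason previous approaches based on Kac's formula or Erd\H{o}s-Offord combinatorics could not reach error $O(1)$.
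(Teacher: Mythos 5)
Your partition of $[0,1]$ is mis-calibrated in a way that breaks the argument. By the Edelman--Kostlan formula, the Gaussian root density on $[0,1]$ is approximately $\frac{1}{2\pi(1-x)}$ whenever $1-x \gg 1/n$, and integrating this over your ``far'' region $I_1=[0,1-C\log n/n]$ gives roughly $\frac{1}{2\pi}\log\frac{n}{C\log n}=\frac{1}{2\pi}\log n - O(\log\log n)$. So $I_1$ carries the \emph{main} term, not $O(1)$. Your ``near'' region $I_2=[1-C\log n/n,\,1-c/n]$ contributes only $O(\log\log n)$, and $I_3$ contributes $O(1)$. Concretely, a Kolmogorov--Rogozin small-ball bound on each of the $\Theta(\log n)$ dyadic subintervals of $I_1$ yields a \emph{per-interval} estimate of $O(1)$, hence $O(\log n)$ in total, not $O(1)$; you cannot bound a region with $\Theta(\log n)$ expected roots by $O(1)$ using anti-concentration alone. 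The region that genuinely contributes $O(1)$ is $[0,1-C_0^{-1})$ for a \emph{constant} $C_0$, which is what the paper's Lemma \ref{fact:edge} proves (via Jensen's inequality in the spirit of Ibragimov--Maslova, not by dyadic anti-concentration).

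Because of this, the universality step must reach all the way out to $(1-C_0^{-1},1]$ for constant $C_0$, not merely to $(1-C\log n/n,1]$, and that is precisely the obstacle your sketch does not resolve. The Lindeberg-type comparison you describe is, in spirit, what \cite{TVpolynom} already delivers (Theorem \ref{theorem:TV}), but only for windows $(1-r,1]$ with $r\le n^{-\epsilon}$, i.e., polynomially close to $1$; outside that window the per-coordinate swap error does not sum to $O(1)$, which is the limitation you yourself flag. The paper's actual new ideas are: (i) Lemma \ref{lemma:dbroot}, showing that near double roots do not occur in the bulk with probability $1-o(n^{-C})$, which by Lemmas \ref{approximation} and \ref{lemma:distance} implies the roots of $P_n$ and of a truncated polynomial $P_m$ with $m\approx r^{-1}\log n$ match up bijectively on an interval at scale $r$ (Lemma \ref{lemma:comparison}); and (ii) a telescoping truncation that iterates this comparison, repeatedly applying the $n^{-\epsilon}$-window universality to polynomials of rapidly decreasing degree until the window has widened to constant size, with the errors $\sum_i n_i^{-1}+n_i^{-\alpha}$ summing geometrically to $O(1)$. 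Neither ingredient appears in your proposal, and without them the mesh-based swap on $I_2$, even if completed, reaches only an $O(\log\log n)$ fraction of the real roots in $[0,1]$.
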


Without loss of generality, we will assume $\ep$ to be sufficiently small.  To emphasize the dependence of the hidden constant in  big $O$  on the atom variable  $\xi$,  let  us notice that if $\xi$ is Bernoulli, then the random polynomial  $P_{n, \xi} $ does not have any real root in the interval $(-1/2, 1/2)$ with probability 1. On the other hand, 
one can show that if  $\xi$ is gaussian then the expectation of number of  real roots in $(-1/2, 1/2)$ is $\frac{1}{\pi}\log 3 + o(n^{-17})$.  Thus, it is reasonable to expect  that 
the expectation in the Gaussian case exceeds that in the Bernoulli case by  a positive constant. Our numerical experiment tends to agree with this.

\begin{figure}[ht!]
\centering
\includegraphics[width=150mm]{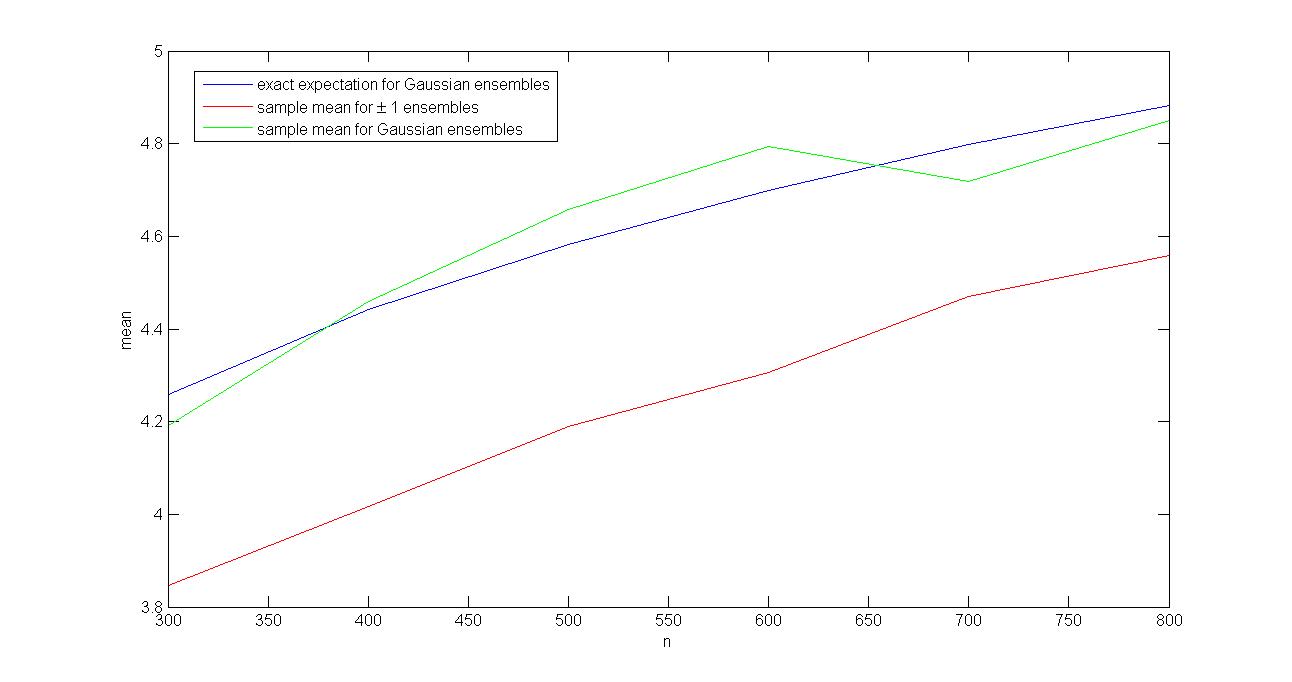}
\caption{Sample means of the number of real roots for Gaussian and Bernoulli ensembles; the $x$-axis represents the degree. It seems that the expectation in the Gaussian cases exceeds that in the Bernoulli case by roughly $.4$.}
\end{figure}

Theorem \ref{theorem:main} is a corollary of a stronger theorem, which provides an  even more satisfying  estimate on the main part of the spectrum. For any region $D\subset \R$, let $N_{n, \xi} D$ denote the number of 
real roots in  $D$. It is well known (see for instance \cite{BSbook,Far})  that one can reduce the problem of estimating $N_{n, \xi}$ to $N_{n, \xi} [0,1]$; as a matter of fact 

$$\E N_{n, \xi} =  4 \E N_{n, \xi} [0,1] . $$

Inside the interval $[0,1]$, most of the real roots are  clustered near  1.  For any constant $C$, the number of roots between $0$ and $1 -C^{-1} $ is only $O_{C} (1) $. More precisely,

%
%
%
%

\begin{lemma}\label{fact:edge}  
 For any positive constant $C$, there exists a constant $M(C)$ such that 
\begin{equation} \label{Kac11-0} \E N_{n, \xi} [0, 1- C^{-1} ) \le M(C). \end{equation}

%

Furthermore, there exists a constant $ C_0$ such that for any $C$ greater than $C_0$, 

\begin{equation} \label{Kac11-1}   \E N_{n, \xi} [0, 1- C^{-1} )  \le \frac{1}{2\pi}   \log C + M(C_0).
 \end{equation}
\end{lemma}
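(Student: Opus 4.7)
The plan is to bound $\E N_{n,\xi}[0, 1-C^{-1})$ by covering the interval with complex disks and applying Jensen's formula on each disk, which controls the number of complex (hence real) zeros inside in terms of $\log|P_{n,\xi}|$ on the disks' boundaries and centers.

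Concretely, I would partition $[0, 1 - C^{-1}) = J_0 \cup J_1 \cup \cdots \cup J_K$ dyadically, with $J_0 = [0, 1/2)$ and $J_k = [1 - 2^{-k}, 1 - 2^{-k-1})$ for $k = 1, 2, \ldots, K \asymp \log_2 C$. For each piece, pick the midpoint $x_k$ (so $1 - x_k \asymp 2^{-k}$), an inner radius $r_k \asymp 2^{-k}$ with $J_k \subset [x_k - r_k, x_k + r_k]$, and an outer radius $R_k = \alpha r_k$ for a fixed $\alpha > 1$ chosen so that $R_k < 1 - x_k$. Jensen's formula produces
\begin{equation*}
  N_{n,\xi}(J_k) \;\le\; \frac{1}{\log \alpha}\left[\frac{1}{2\pi}\int_0^{2\pi}\log\bigl|P_{n,\xi}(x_k + R_k e^{i\theta})\bigr|\, d\theta \;-\; \log\bigl|P_{n,\xi}(x_k)\bigr|\right].
\end{equation*}
Taking expectations, I would bound the first term above by $\tfrac{1}{2}\log\tfrac{1}{1 - (x_k + R_k)^2} = \tfrac{k\log 2}{2} + O(1)$ via Jensen's inequality in probability and the identity $\E|P_{n,\xi}(z)|^2 = \sum_i |z|^{2i} \le (1 - |z|^2)^{-1}$, and the second term below by $\tfrac{k\log 2}{2} - O(1)$ via a small-ball (anti-concentration) estimate for the iid sum $P_{n,\xi}(x_k) = \sum_i \xi_i x_k^i$ of variance $\asymp 2^k$; such estimates follow from Esseen--Hal\'asz-type inequalities under the $(2+\epsilon)$-moment hypothesis on $\xi$. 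The two $\tfrac{k\log 2}{2}$ contributions cancel, leaving $O(1)$ per dyadic piece; summing over the $K+1 = O(\log C)$ pieces yields the qualitative bound \eqref{Kac11-0} with $M(C) = O(\log C)$.

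To sharpen the constant to $\tfrac{1}{2\pi}$ in \eqref{Kac11-1}, the natural strategy is to work in the coordinate $u = \tfrac{1}{2}\log\tfrac{1+x}{1-x}$, under which $[0, 1-C^{-1})$ maps bijectively to $[0, u_\ast]$ with $u_\ast = \tfrac{1}{2}\log(2C - 1) = \tfrac{1}{2}\log C + O(1)$, and the Edelman--Kostlan Gaussian density $\tfrac{1}{\pi(1-x^2)}\,dx$ becomes the constant $\tfrac{1}{\pi}\,du$; thus each dyadic piece (of $u$-length $\log 2$) should contribute at most $\tfrac{\log 2}{2\pi} + o(1)$ rather than the $O(1)$ produced by the crude Jensen bound. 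Achieving this per-piece sharpening likely proceeds by a Lindeberg-type universality comparison to the Gaussian model on each $u$-window, combined with a summability argument so that the accumulated errors form a finite additive constant $M(C_0)$. The main obstacle is precisely this refinement of the anti-concentration step for discrete $\xi$ such as Bernoulli: the small-ball probability $\P(|P_{n,\xi}(x_k)| \le t)$ is limited by the local-CLT scale $n^{-1/2}\sqrt{1-x_k^2}$, and matching $\E\log|P_{n,\xi}(x_k)|$ to its Gaussian counterpart to within $o(1)$ requires either conditioning on a subset of the $\xi_i$ to inject enough continuity, or direct Salem--Zygmund-type estimates on $\log|P_{n,\xi}|$. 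Pinning the leading constant to $\tfrac{1}{2\pi}$, rather than settling for $O(\log C)$, is the delicate final ingredient.
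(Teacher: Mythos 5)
Your approach to \eqref{Kac11-0} is genuinely different from the paper's. You propose a multiscale Jensen's-formula argument: cover $[0,1-C^{-1})$ by $O(\log C)$ dyadic windows, apply Jensen's formula on a disk for each window, upper-bound the circle average of $\E\log|P_n|$ by $\tfrac12\log\E|P_n|^2$, and lower-bound $\E\log|P_n(x_k)|$ at each center by a small-ball/anti-concentration estimate, so that the leading terms cancel window by window. The paper instead uses a \emph{single} application of Jensen's formula to $P_n^{(k)}$ on the disk of radius $1-\tfrac{1}{2C}$, after conditioning on the events $B_k=\{|\xi_0|\le c,\dots,|\xi_{k-1}|\le c,\ |\xi_k|>c\}$ (Ibragimov--Maslova's trick). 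The crucial payoff of the $B_k$-conditioning is that $|P_n^{(k)}(0)|=k!|\xi_k|>k!c$ is bounded below \emph{deterministically} on $B_k$, so no anti-concentration is needed at all, while the probabilities $\P(B_k)=(1-q)q^k$ decay geometrically and control the sum over $k$. Your route buys a sharper $C$-dependence ($O(\log C)$ rather than the paper's cruder bound), but it requires a small-ball estimate at every center $x_k$; under only a $(2+\epsilon)$-moment assumption on a possibly discrete $\xi$, that is exactly the kind of input the paper went to lengths to establish only in the narrower range $(1-B_0^{-1},1-B_1\log n/n]$ (Lemma \ref{lemma:ball}), so this step is not "for free" as your appeal to ``Esseen--Hal\'asz-type'' bounds suggests.

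For \eqref{Kac11-1} there is a genuine gap: you sketch a per-window Lindeberg-type sharpening but explicitly leave it as ``the delicate final ingredient,'' so the sharp constant $\tfrac{1}{2\pi}$ is not actually obtained. You also miss the shortcut that the paper uses and that the lemma's position in the paper is designed to exploit: by that point Theorem \ref{theorem:main1} (via Remark \ref{rmk1}) already gives $|\E N_n I - \E N_{n,N(0,1)} I|\le 1$ for any subinterval $I\subset[1-C_0^{-1},1]$, so one simply applies \eqref{Kac11-0} on $[0,1-C_0^{-1})$, uses the universality bound on $[1-C_0^{-1},1-C^{-1})$, and then evaluates the Gaussian count there with the Edelman--Kostlan integral, which is bounded by $\tfrac{1}{2\pi}\log C+O(1)$ because the Kac density is at most $\tfrac{1}{\pi(1-x^2)}$. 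Recognizing that \eqref{Kac11-1} should be deduced from the already-established bulk universality rather than re-derived from scratch is the key structural point you are missing.
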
  
Notice that $C$ is allowed to depend on $n$ in \eqref{Kac11-1}. Thus (by taking $C$ to be, say, $100 n$)  \eqref{Kac11-1} almost gives the upper bound in Theorem \ref{theorem:main}. As well known in this area, the lower bound is 
often the heart of the matter.


Let us  focus on   the bulk of the spectrum,  the interval $(1- C^{-1} , 1]$.  For this part,  we obtain the following precise estimates, regardless 
the nature of the atom variable $\xi$. 

\begin{theorem}\label{theorem:main1} There exists a constant $C_0$ such that for  any variable $\xi$ with mean 0, variance 1,  and bounded 
$(2+\epsilon)$-moment
\begin{equation}   \label{Kac10} 
\left |\E N_{n, \xi}  (1- C_0 ^{-1} , 1]  - \int_{1- C_0^{-1} }^1  \frac{1}{\pi} \sqrt { \frac{1}{(t^2-1)^2} - \frac{(n+1)^2 t^{2n} }{ (t^{2n+2} -1)^2 } } dt \right |
 \le   C_0 ^{-\ep}.  
 \end{equation} 
 
 Furthermore, for any number $C \ge C_0$, there exists  $C'\in [C^\ep, C]$  such that 
\begin{equation}   \label{Kac11} 
\left |\E N_{n, \xi}  (1- {C'} ^{-1} , 1]  -   \int_{1- {C'}^{-1} }^1  \frac{1}{\pi} \sqrt { \frac{1}{(t^2-1)^2} - \frac{(n+1)^2 t^{2n} }{ (t^{2n+2} -1)^2 } } dt   \right | \le   {C'}^{-1 }.  
\end{equation} 

 \end{theorem}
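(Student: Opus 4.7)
The plan is to prove Theorem \ref{theorem:main1} as a universality statement: the expected density of real zeros of $P_{n,\xi}$ at a point $t$ in the bulk region near $1$ is asymptotically distribution-independent, and hence matches the Gaussian (Kac) density, for which the Edelman--Kostlan formula yields exactly the integrand appearing in \eqref{Kac10}--\eqref{Kac11}. Thus the task is to prove a quantitative pointwise comparison between the intensity of real zeros of $P_{n,\xi}$ and of $P_{n,N(0,1)}$, then integrate.

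First, I would represent the root count through a smoothed Kac--Rice identity. Since $\xi$ may be discrete, the joint density of $(P_{n,\xi}(t), P_{n,\xi}'(t))$ need not exist pointwise, so the pointwise intensity is replaced by a mollified version: for small $\delta>0$,
\[
\E N_{n,\xi}(I) \;=\; \int_I \frac{1}{2\delta}\,\E\!\left[\,|P_{n,\xi}'(t)|\,\mathbf{1}_{\{|P_{n,\xi}(t)|\le \delta\}}\right] dt \;+\; (\text{edge error}),
\]
where the edge error is controlled once $\delta$ is below the typical gap between consecutive real zeros at scale $1-t$. After this reduction the problem becomes the pointwise estimation of the bracketed expectation, uniformly in $t \in (1-C^{-1},1]$.

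Second, I would establish a local universality for the joint distribution of $(P_{n,\xi}(t), P_{n,\xi}'(t))$. For $t$ in the range of interest, both random variables are sums of $m(t) \asymp \min\{n, 1/(1-t)\}$ comparable independent terms, with covariance matching the Gaussian case exactly (the variances are determined only by the second moment of $\xi$). A coefficient-by-coefficient Lindeberg swap then allows me to replace each $\xi_i$ by a Gaussian $\eta_i$, the per-swap error being controlled by a third-order Taylor expansion, the $(2+\ep)$-moment hypothesis, and an anti-concentration bound of the form $\sup_x \Pr(|P_{n,\xi}(t)-x| \le \delta) = O(\delta/\sigma(t))$ together with the analogous bound for the derivative. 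Summing over the $n+1$ swaps yields an error of order $\delta^{-1} n^{-\ep/2}\cdot \Pr(|P_{n,\xi}(t)| \le \delta)$, which, after the integration in $t$, should contribute only $O(C^{-\ep})$ to $\E N_{n,\xi}(1-C^{-1},1]$ once $\delta$ is calibrated to the local scale. Applying this comparison and then invoking the Edelman--Kostlan formula for the Gaussian case yields \eqref{Kac10}.

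Third, to deduce \eqref{Kac11} I would use an averaging/pigeonhole argument. Decompose $[C^\ep, C]$ dyadically into $O(\log C)$ scales and apply \eqref{Kac10}-type estimates at each scale; since the total universality error summed across all scales is bounded, at least one scale $C' \in [C^\ep, C]$ must carry an error no worse than $(C')^{-1}$, giving the improved bound.

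The main obstacle I anticipate is the simultaneous tuning of $\delta$ and the Lindeberg error while keeping the anti-concentration bounds sharp. The delicate regime is $t$ extremely close to $1$, where $\sigma(t)$ degenerates, the effective number of summands $m(t)$ becomes small, and standard Littlewood--Offord inputs weaken; here one likely needs to exploit the full $(2+\ep)$-moment (rather than just variance) to retain Gaussian-like anti-concentration under the conditioning required by the swap. Lemma \ref{fact:edge} should help absorb the truly degenerate portion $t$ within $O(1/n)$ of $1$ into the error term, but joining its regime smoothly to the bulk comparison is a technical point that will require care.
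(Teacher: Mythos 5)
Your proposal takes a genuinely different route from the paper, and unfortunately it runs into the exact obstacle that the paper was designed to circumvent. You plan to prove pointwise universality of the Kac--Rice intensity via a smoothed Kac--Rice formula plus a coefficient-by-coefficient Lindeberg swap, uniformly for $t \in (1-C^{-1},1]$. This is essentially a re-derivation of the Tao--Vu local universality theorem (\cite{TVpolynom}), and the key point is that such an argument only controls the error for $t$ \emph{polynomially close} to $1$, i.e.\ $1-t \le n^{-\epsilon}$. The paper imports this result as Theorem~\ref{theorem:TV} and explicitly flags that its restriction to the region $(1-n^{-\epsilon},1]$ is the main technical obstacle. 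When $1-t$ is a fixed constant $c$, the quantity $P_n(t) = \sum_i \xi_i t^i$ is dominated by only $O(1/c)=O(1)$ effective terms, so both the density of $P_n(t)$ near $0$ and the Kac--Rice intensity genuinely depend on the law of $\xi$; no Lindeberg swap can make them distribution-free. A direct estimate of your proposed error term bears this out: the third-moment sum $\sum_i t^{3i} \asymp 1/(1-t)$ forces the swap error (for the mollified indicator at scale $\delta$) to be of order $\delta^{-3}(1-t)^{-1}$, which is comparable to the target intensity $1/(1-t)$ only if $\delta \gtrsim 1$ --- at which point the mollification no longer captures the root count. So the universality error does not tend to $0$ at constant distance from $1$, and the integration in $t$ cannot save you.

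The paper's actual argument sidesteps this by \emph{changing the degree, not the method}. Given a point at constant distance $c=1-t$ from $1$, it truncates $P_n$ to $P_m$ with $m \asymp c^{-1}\log n$ (Lemma~\ref{lemma:comparison}); relative to the truncated degree $m$, the same point is now $m^{-\epsilon}$-close to $1$, so Theorem~\ref{theorem:TV} \emph{does} apply. The truncation step is justified not by any universality statement but by the new repulsion input, Lemma~\ref{lemma:dbroot} (no near-double roots in the bulk, with polynomially small failure probability), together with Lemma~\ref{lemma:distance} and the deterministic perturbation Lemma~\ref{approximation}: these guarantee the root counts of $P_n$ and $P_m$ agree on the relevant interval with overwhelming probability. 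Iterating this over a telescoping sequence of degrees $n = n_0 > n_1 > \cdots > n_L$ and intervals $I_i=(1-r_i,1-r_{i-1}]$ with $r_i = n_i^{-\epsilon}$ produces the bound, and $C'$ is then simply $n_L^{\epsilon}$. Two smaller points: (i) your description of the ``delicate regime'' is inverted --- near $t=1$ the variance of $P_n(t)$ is \emph{large} and the effective number of summands is \emph{large}; the degenerate regime is $1-t$ bounded away from $0$. Correspondingly, Lemma~\ref{fact:edge} controls $[0,1-C^{-1})$, the left edge, not $t$ within $O(1/n)$ of $1$. (ii) Your dyadic-pigeonhole step would yield an error of order $1/\log C$ at the chosen scale, which is far weaker than the $(C')^{-1}$ claimed; the paper instead tracks the telescoping errors explicitly and shows $\sum_i n_i^{-\alpha}=o((C')^{-1})$ by the geometric decay of the $n_i$. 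In short, the proposal is missing the central new idea of the paper --- near-double-root repulsion as a mechanism to transfer universality from small degrees to large ones --- and without it the Kac--Rice/Lindeberg route cannot reach intervals at constant distance from $1$.
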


%





The  integral on the LHS of \eqref{Kac10} is the explicit formula for 
 $\E N_{n, N(0,1) }  (1- C_0^{-1} , 1] $ (see \cite{EK}). Thus, one can rephrase \eqref{Kac10} as 

\begin{equation}   \label{Kac12} 
\left | \E N_{n, \xi}  (1- C_0^{-1} , 1]  -   \E N_{n, N(0,1)}  (1- C_0^{-1} , 1]  \right | \le   C_0 ^{-\ep}.  
\end{equation} 

This, combining with the argument following Theorem \ref{theorem:main},  reveals an interesting fact that the 
 impact of the distribution of  $\xi$  is  felt  only at the left  "edge" of the spectrum.

Theorem \ref{theorem:main} follows immediately  from Lemma \ref{fact:edge} and Theorem \ref{theorem:main1}.  Our proofs are  quantitative and in principle one can 
derive an explicit value for $C_0$. However, this involves a  tedious book keeping and in general we do not try to optimize the constants in this paper.  
Our proof also shows that \eqref{Kac12}  still  holds if we replace the interval $(1- C_0^{-1}, 1]$ by any subinterval. Furthermore, our approach, which  makes use of 
a recent  universality result from \cite{TVpolynom}  and the non-existence of near double roots,  is entirely different from  previous approaches.

\begin{remark} \label{remark:general}  We would like to point out an important fact that our results hold, without any significant modification in the proof,  
for more general settings where  the variables 
$\xi_i$ in the definition of $P_n$ are not iid. It suffices to assume that they all have mean 0, variance 1,  and uniformly bounded $(2+\epsilon)$-moments.  
\end{remark} 



\section{Number of real roots in an interval very close to 1} 

Our starting point is the following theorem, which  is a corollary of   \cite[Theorem 25]{TVpolynom}.

%

\begin{theorem}  \label{theorem:TV} There is a positive constant $\alpha$ such that the following holds. 
Let $\epsilon >0$ be an arbitrary small constant and  $\xi$ be any random variable with mean zero, variance one and bounded $(2+\epsilon)$-moment. Then
there is a constant $C_1 := C_1 (\epsilon) $ such that for any  $n \ge  C_1$ and any interval $I := (1-r, a) \subset (1- n^{-\epsilon} , 1]$

\begin{equation}  \label{small2} 
|\E _{n, \xi} I   - \E _{n, N(0,1) } I  | \le n ^{-\alpha}.
\end{equation} 
\end{theorem}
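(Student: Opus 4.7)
The plan is to deduce Theorem \ref{theorem:TV} from the universality statement \cite[Theorem 25]{TVpolynom} by a smoothing argument combined with a Kac--Rice type bound on the boundary error. The theorem of Tao and Vu compares the expected value of $\sum_t F(t)$ over real zeros of $P_{n,\xi}$ and $P_{n,N(0,1)}$ for sufficiently smooth test functions $F$ supported inside $(1-n^{-\epsilon},1]$, with error $n^{-\beta}$ times a Sobolev-type norm of $F$.

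Fix $I=(1-r,a)\subset(1-n^{-\epsilon},1]$ and introduce a smoothing scale $\delta=n^{-\kappa}$ for a constant $\kappa>1$ to be chosen. Build smooth cutoffs $F^-\le \mathbf{1}_I\le F^+$ valued in $[0,1]$, with $F^+$ supported in the $\delta$-neighborhood $I^+$ of $I$ and equal to $1$ on $I$, and $F^-$ supported in $I$ and equal to $1$ on the $\delta$-shrinking $I^-$, with $\|F^\pm\|_{C^k}=O_k(\delta^{-k})$ for some fixed small $k$. Writing $N_{n,\xi}(F):=\sum_{t\in\R:\,P_{n,\xi}(t)=0}F(t)$, applying \cite[Theorem 25]{TVpolynom} gives
\begin{equation*}
\bigl|\E N_{n,\xi}(F^\pm)-\E N_{n,N(0,1)}(F^\pm)\bigr|\le n^{-\beta+Ck\kappa}
\end{equation*}
for an absolute constant $C$, which is at most $n^{-\alpha_1}$ for some $\alpha_1>0$ provided $\kappa$ is chosen so that $Ck\kappa<\beta$.

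Sandwiching $\E N_{n,\xi}(I)$ between $\E N_{n,\xi}(F^-)$ and $\E N_{n,\xi}(F^+)$ (and analogously for the Gaussian case) and invoking the triangle inequality reduces the theorem to the boundary bound
\begin{equation*}
\E N_{n,\xi}(I^+\setminus I^-)+\E N_{n,N(0,1)}(I^+\setminus I^-)\le n^{-\alpha_2}
\end{equation*}
for some $\alpha_2>0$, where $I^+\setminus I^-$ is the union of two strips of width at most $\delta$. In the Gaussian case this follows from the Edelman--Kostlan density, which on $(1-n^{-\epsilon},1]$ is bounded by $O(\min(n,(1-t)^{-1}))\le O(n)$, so each strip contains at most $O(n\delta)=O(n^{1-\kappa})$ expected roots. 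For the $\xi$-case one uses the Kac--Rice formula with the true density of $(P_{n,\xi}(t),P_{n,\xi}'(t))$, whose small-ball behaviour is controlled by anti-concentration estimates of Littlewood--Offord type; this is the ``no near double roots'' ingredient highlighted in the introduction.

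The principal obstacle is calibrating $\kappa$ and $k$ so that $\kappa>1+\alpha_2$ (making the boundary strip polynomially small) while simultaneously $Ck\kappa+\alpha_1<\beta$ (keeping the smoothing loss inside the universality error budget). Since $k$ is a fixed constant determined by the regularity demanded in \cite[Theorem 25]{TVpolynom} and $\beta>0$ depends only on $\epsilon$, both can be met by taking $k$ small (in fact $k=1$ suffices once $\beta>1$, after at most one bootstrap), $\kappa$ just above $1$, and $\alpha:=\min(\alpha_1,\alpha_2)/2$. The deep input is the Tao--Vu universality itself, used as a black box; the boundary step is a routine Kac--Rice computation.
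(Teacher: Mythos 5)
The paper does not actually prove Theorem \ref{theorem:TV}: it records it as an immediate corollary of \cite[Theorem 25]{TVpolynom} and moves on, so there is no internal argument to compare against. Your proposal is therefore a reconstruction of (part of) the Tao--Vu deduction rather than of anything in this paper. With that caveat, two points deserve comment.

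First, if the black box one starts from is stated for smooth compactly supported test functions, the sandwiching strategy $F^-\le\mathbf{1}_I\le F^+$ is indeed the standard route to intervals, and the calibration of the smoothing scale $\delta=n^{-\kappa}$ against the universality error budget $n^{-\beta}$ is done correctly in spirit. However, your treatment of the boundary term for the non-Gaussian side contains a genuine gap. You write that one ``uses the Kac--Rice formula with the true density of $(P_{n,\xi}(t),P_{n,\xi}'(t))$,'' but the Kac--Rice formula requires that joint law to be absolutely continuous, which fails precisely for the distributions the theorem is designed to cover (e.g.\ Bernoulli). Moreover, the ``no near double roots'' lemma of this paper (Lemma \ref{lemma:dbroot}) is not a density bound and is logically downstream of Theorem \ref{theorem:TV}; invoking it here is circular in the context of the paper's architecture, and in any case it controls the size of $|P_n'|$ at roots rather than the expected root count in a thin strip. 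The correct repair is the bootstrap you gesture at without carrying out: take a smooth bump $G$ valued in $[0,1]$, equal to $1$ on the two strips $I^+\setminus I^-$ of width $O(\delta)$, supported in a slightly fatter set still inside $(1-n^{-\epsilon},1]$, with the same $C^k$ bounds as $F^\pm$; apply the smooth-test-function universality once more to get $\E N_{n,\xi}(G)\le\E N_{n,N(0,1)}(G)+n^{-\alpha_1}$; and bound $\E N_{n,N(0,1)}(G)=O(n\delta)=O(n^{1-\kappa})$ via the Edelman--Kostlan density, which you already use correctly on the Gaussian side. This removes Kac--Rice from the $\xi$-side entirely and keeps the argument inside the hypotheses actually available.

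Second, a small but necessary care: when $a=1$ the outward cutoff $F^+$ must be one-sided, since the Tao--Vu input is formulated for test functions supported in $(1-n^{-\epsilon},1]$; an $F^+$ spilling past $t=1$ is not obviously covered. Taking $F^+$ to equal $\mathbf{1}_I$ near $t=1$ and only smoothing at the left endpoint handles this.
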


This is close, in spirit, to \eqref{Kac12}. The main technical obstacle here is that the result holds only in a region polynomially close to 1. 
The key  new ingredient  we have in this paper is the observation that 
a random polynomial, with high probability, does  not have double   or  near double roots. 
 We discuss this  observation, which is  of independent interest, in the next section.  At the end, we can prove \eqref{Kac12} by  combining (a sufficiently quantitative version of) this observation with Theorem \ref{theorem:TV}. The proof of Lemma \ref{fact:edge}, which is independent from the main argument, is provided at the end of the paper.

\section{Non-existence of near double roots}

A double root $\lambda$ satisfies $P_n (\lambda) = P_n' (\lambda) =0$.  We introduce a more  general notion of {\it near double roots}: $\lambda$ is  a  near double root if $P_n (\lambda) =0$ and 
$| P'_n (\lambda)|$ is small.  Existence of double roots and near double roots are of interest in analysis and numerical analysis  (see for instance 
the studies of Newton's method for finding real roots \cite{BCSS}). 

Our new tool is  the following lemma, which asserts that there are no near double roots in the bulk of the spectrum with high probability.


\begin{lemma}\label{lemma:dbroot}  For any constant $C > 0$, there exist $B=B(C),B_0=B_0(C)$, and $B_1=B_1(C)$ such that
  
$$\P\Big( \exists x\in (1- B_0^{-1}, 1 - \frac{ B_1 \log n}{n} ]: P_n(x)=0, |P_n'(x)| \le n^{-B}\Big) = o(n^{-C}) .$$
\end{lemma}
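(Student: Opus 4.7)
The plan is a discretization-plus-anti-concentration argument. Build a fine net of $I := (1 - B_0^{-1}, 1 - B_1 \log n/n]$, show via Taylor expansion that a near double root at any $x\in I$ forces the two-dimensional vector $(P_n(x_j), P_n'(x_j))$ to be very small at the nearest net point $x_j$, and then bound the resulting joint small-ball event via a two-dimensional anti-concentration estimate, summed over the net.

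\textbf{Net and Taylor reduction.} Fix a mesh $\delta := n^{-D}$ for a constant $D = D(B_0, B_1, B, C)$ to be chosen, and let $\CN \subset I$ be a $\delta$-net, so $|\CN| \le n^D$. Using Markov's inequality and the $(2+\ep)$-moment assumption, on an event $\CE$ of probability $1 - o(n^{-C})$ one has $\max_i |\xi_i| \le n^{\rho}$ for some constant $\rho = \rho(\ep, C)$, and therefore $\|P_n^{(k)}\|_{L^\infty[0,1]} \le n^{\rho + O(1)}$ for $k=0,1,2$. Suppose now that $x \in I$ satisfies $P_n(x) = 0$ and $|P_n'(x)| \le n^{-B}$, and let $x_j \in \CN$ be nearest to $x$; on $\CE$, Taylor expansion gives
\[
|P_n(x_j)| \le \delta\, n^{-B} + \delta^2 n^{\rho + O(1)}, \qquad |P_n'(x_j)| \le n^{-B} + \delta\, n^{\rho + O(1)},
\]
both at most $n^{-B/2}$ provided $D$ is chosen sufficiently large compared to $\rho$. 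Set $B' := B/2$.

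\textbf{Two-dimensional anti-concentration.} It now suffices to prove that for every fixed $x_j \in I$,
\[
\P\bigl(|P_n(x_j)| \le n^{-B'},\ |P_n'(x_j)| \le n^{-B'}\bigr) = o(n^{-C-D-1}),
\]
since the conclusion of the lemma follows by summing over the $\le n^D$ net points and adding $\P(\CE^c)$. Writing $V_j := (P_n(x_j), P_n'(x_j)) = \sum_{i=0}^n \xi_i v_i$ with $v_i := (x_j^i,\, i\, x_j^{i-1}) \in \R^2$, the task is a joint small-ball estimate for $V_j$. Observing that $v_0 = (1,0)$ and $v_1 = (x_j, 1)$ are linearly independent with $|\det(v_0, v_1)| = 1$, and that for $i \le \log n$ one still has $|v_i| \ge 1$ since $x_j \ge 1 - B_0^{-1}$, a two-dimensional Halász / Littlewood--Offord-type inequality applied to a block of $\Theta(n)$ of the coefficients yields
\[
\P(V_j \in \text{ball}(r)) \le r^2 \cdot n^{O(1)},
\]
which is $o(n^{-C-D-1})$ once we choose $B$ (and hence $B'$) large enough relative to $C$ and $D$.

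\textbf{Main obstacle.} The crux of the argument is the joint two-dimensional anti-concentration step. A one-dimensional small-ball bound on $|P_n(x_j)|$ alone is only $O(n^{-1/2})$ for atomic $\xi$ (e.g.\ Bernoulli), and so cannot survive a union bound over an $n^D$-sized net; the quadratic improvement obtained by pairing with the event $|P_n'(x_j)| \le n^{-B'}$ is essential. Establishing the $r^2$-type bound uniformly in $x_j \in I$ requires a quantitative non-degeneracy of the coefficient vectors $v_i$ in $\R^2$, and this is precisely what the upper cutoff $1 - B_1 \log n/n$ is designed to secure: it guarantees $x_j^n = O(n^{-B_1})$, so that the tail $\sum_{i \gg n} v_i$ is negligible and a linear block of the $v_i$'s genuinely spans $\R^2$ in a well-conditioned way, allowing the 2D Halász-type inequality to deliver the decay needed to beat the net size.
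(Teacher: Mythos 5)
Your approach---net, Taylor reduction, anti-concentration over net points, union bound---is the right skeleton, and it is the same skeleton the paper uses. However, there are two serious problems, one of them a genuine gap that the proposal would not survive.

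\textbf{The two-dimensional anti-concentration step is not established and, as stated, is almost certainly false.} You assert that ``a two-dimensional Hal\'asz / Littlewood--Offord-type inequality applied to a block of $\Theta(n)$ of the coefficients yields $\P(V_j \in \mbox{ball}(r)) \le r^2 \cdot n^{O(1)}$.'' No standard Hal\'asz or inverse Littlewood--Offord inequality for discrete $\xi$ produces an $r^2$-type small-ball bound uniformly in the (polynomially small) radius $r$. The usual 2D Hal\'asz inequality bounds $\sup_v \P(V_j = v)$ by a negative power of the number of non-degenerate coefficient vectors, i.e.\ it gives $n^{-O(1)}$, not an $r^2$ scaling; for $r$ going to zero the bound plateaus at the concentration-at-a-point probability, which is polynomial in $n$ rather than super-polynomial. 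An $r^2$-decay at arbitrarily small polynomial scales is exactly the kind of refined structure-dependent estimate that would have to be proved, not quoted, and you give no argument for it. Without it, your union bound over the $n^D$ net points does not close ($n^D \cdot n^{-O(1)}$ is not $o(n^{-C})$).

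\textbf{The premise that motivates the 2D detour is wrong.} You write that ``A one-dimensional small-ball bound on $|P_n(x_j)|$ alone is only $O(n^{-1/2})$ for atomic $\xi$ \dots and so cannot survive a union bound over an $n^D$-sized net.'' That estimate is only correct at the extreme edge $x\to 1$; in the bulk $x\in(1-B_0^{-1},\,1-B_1\log n/n]$ the weights $x^i$ decay geometrically, and the paper exploits exactly this lacunary structure to prove a much stronger one-dimensional bound. Specifically, the paper establishes (Lemma~\ref{lemma:ball}) that $\P(|P_n(x)|\le \gamma) = O(\gamma^{.99})$, essentially linear in $\gamma$, by a resampling/pigeonhole argument that extracts a geometrically separated subset of weights and shows the corresponding sign choices produce well-separated sums. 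Because the bound is polynomial in $\gamma$ (with $\gamma = 2\delta^2 n^{4+C/2}$ much smaller than $\delta$), multiplying by $\delta^{-1}$ net points still yields $o(n^{-C})$. The paper does use the hypothesis $|P_n'(x)|\le n^{-B}$, but only through two applications of the mean value theorem to convert a near double root at $x$ into a tiny value $|P_n(x_Q)|\le \delta^2 n^{4+C/2}+\delta n^{-B}$ at the nearest net point $x_Q$; it never needs a joint two-dimensional small-ball estimate. Your plan would become correct if you replaced the unjustified 2D Hal\'asz step by this 1D lacunary small-ball bound and used the derivative condition only via the mean value theorem.
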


\subsection{Preliminaries}  To start, we deduce a property of  polynomials having a near double  root. 
Let $\delta$ be a small parameter to be chosen and $Q \subset (1/2,1]$ be an interval of length $2\delta$  centered at $x_Q$. 
If there is  $x\in Q$ such that $P_n(x)=0$  then by the mean value theorem
$|P_n(x_Q)|\le \delta |P_n\rq{}(y)|$ for some $y $ between $x$ and $x_Q$.   (We can write $\delta/2$ instead of $\delta$ on the RHS, but this does not make any difference.)

Assume that $|P'_n (x)| \le n^{-B}$, then by applying the mean value theorem again, we have $|P'_n (y) |\le \delta |P^{''}_n (z) |+ n^{-B}$ for some $z$ between $x$ and $y$.  Furthermore, with a loss of a probability bound $O(n^{-C-1})$, one can assume that $|\xi_i|\le n^{C/2+1}$ for all $i$, and so $|P''(z)|\le n^{4+C/2}$. Thus,
  
 \begin{equation}\label{eqn:bound}
|P_n(x_Q)| \le \delta^2 n^{4+C/2} + \delta n^{-B}.
\end{equation}

Set $\delta :=n^{-A}$ for some suitable constant $A$ to be chosen, and $B:= A-C/2-2$  so  that the term $\delta^2n^{4+C/2}$ dominates. We partition the interval $I:=(1-B_0^{-1}, 1 - \frac{C_1 \log n}{n} ]$ into subintervals $I_i=(1- B_0^{-1}+(i-1)n^{-A-3}, 1- B_0^{-1}+(i+1)n^{-A-3}]$ with center $x_i=1- B_0^{-1}  + i n^{-A-3}$ and length $\delta$ and show that with high probability \eqref{eqn:bound}  fails at every center. 

\subsection{Small ball estimate}

Set $\gamma:= 2\delta^2 n^{4+C/2}$, we are going to prove the following small ball estimate.

\begin{lemma}\label{lemma:ball}
For any $1-B_0^{-1}<x< 1-B_1\log n/n$, one has 

$$\P(|P(x)|\le \gamma) =O({\gamma}^{.99}).$$
\end{lemma}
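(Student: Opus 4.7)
The plan is to control the concentration function of $P_n(x)$ through its characteristic function $\phi_{P_n(x)}(t)=\prod_{i=0}^n\phi_\xi(tx^i)$ and the Esseen smoothing inequality
$$\P(|P_n(x)|\le\gamma)\ \le\ C\gamma\int_{-1/\gamma}^{1/\gamma}|\phi_{P_n(x)}(t)|\,dt.$$
Set $\eta:=1-x$, so $\eta\in(B_1\log n/n,\,B_0^{-1})$ by hypothesis. A geometric-series computation gives $\sigma^2:=\Var P_n(x)=(1-x^{2(n+1)})/(1-x^2)$, and since $x^{2(n+1)}\le n^{-2B_1}$ is negligible while $1-x^2\sim 2\eta$, one has $\sigma^2\sim 1/(2\eta)$. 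In particular $\sigma$ is bounded below by a constant that grows with $B_0$.

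Next one uses the Taylor bound $|\phi_\xi(s)|\le e^{-cs^2}$, valid for $|s|\le s_0$ from $\Var\xi=1$ and the $(2+\epsilon)$-moment. Applied to the product,
$$|\phi_{P_n(x)}(t)|\ \le\ \exp\Big(-c\sum_{i:\,|tx^i|\le s_0}(tx^i)^2\Big).$$
For $|t|\le s_0$ every index contributes and the exponent is $ct^2\sigma^2$, of order $t^2/\eta$; this is the Gaussian bulk. For $s_0\le|t|\le 1/x^n$ (a range containing $|t|\le 1/\gamma$ once $B_1$ is taken large enough relative to the exponent in $\gamma$), the contributing indices start at $i_0(t):=\lceil\log(|t|/s_0)/|\log x|\rceil$, and a short geometric-series computation gives $\sum_{i\ge i_0(t)}^n(tx^i)^2$ of order $s_0^2/\eta$. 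Splitting the integral into these two ranges and integrating yields
$$\P(|P_n(x)|\le\gamma)\ \le\ C_1\gamma\sqrt{\eta}+C_2\,e^{-cs_0^2/\eta}.$$
The first term is $\le C_1\gamma$, which is stronger than $\gamma^{0.99}$. The second term is super-polynomially small in $n$ whenever $\eta\lesssim 1/\log n$, covering much of the stated range for $\eta$.

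The genuine obstacle is the regime where $\eta$ is merely a small fixed constant near $B_0^{-1}$, since there $e^{-cs_0^2/\eta}$ is only a fixed constant less than one, independent of $\gamma$. To close this gap one needs additional decay of $|\phi_{P_n(x)}(t)|$ on the middle range $s_0\le|t|\le 1/\gamma$ beyond the small-argument Taylor bound. When $\xi$ is non-lattice, $|\phi_\xi|\le 1-c_2$ on any compact subset of $\R\setminus\{0\}$, and including these factors over the indices with $|tx^i|\in[s_0,|t|]$ upgrades the product bound to $|\phi_{P_n(x)}(t)|\le|t|^{-c_2/\eta}$, whose integral on $[s_0,1/\gamma]$ is uniformly bounded once $B_0$ is taken large enough that $c_2/\eta>1$. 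The hard case is lattice-valued $\xi$ such as Bernoulli, where $|\phi_\xi|$ has periodic resonances ($|\cos(k\pi)|=1$) and no uniform gap is available; here one would resort to a Halász-style bound built on $1-|\phi_\xi(s)|^2=\E[1-\cos(s(\xi-\xi'))]$, exploiting the observation that the geometric progression $\{tx^i\}_{i=0}^n$ cannot land close to all resonances of $\phi_\xi$ for many $i$ simultaneously. This step is where the bulk of the technical work sits; carrying it out produces a uniformly bounded Fourier integral and hence $\P(|P_n(x)|\le\gamma)=O(\gamma)$, comfortably $O(\gamma^{0.99})$, with constants allowed to depend on $\xi$, $C$, and the choice of $B_0$.
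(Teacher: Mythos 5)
Your proposal takes a characteristic-function route (Esseen smoothing), which is a genuinely different strategy from the paper's, but it has a real gap exactly where you yourself flag one, and that gap is the heart of the lemma. Once $\eta := 1-x$ is a fixed small constant near $B_0^{-1}$, the small-argument Taylor bound $|\phi_\xi(s)|\le e^{-cs^2}$ contributes only a factor $e^{-c s_0^2/\eta}$ on the middle range $s_0\le |t|\le 1/\gamma$, and this is an $\eta$-dependent constant that does not shrink with $\gamma$. You need genuine decay of $|\phi_{P_n(x)}(t)|$ across that range, and you only sketch how to get it. The lattice case (Bernoulli $\xi$) is the one that matters and you explicitly leave the Hal\'asz-style step undone. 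The non-lattice branch is also not quite right as stated: the uniform gap $|\phi_\xi|\le 1-c_2$ holds on any fixed compact subset of $\R\setminus\{0\}$, but the range $[s_0,1/\gamma]$ is not bounded independently of $n$ (indeed $1/\gamma$ grows polynomially in $n$), so the constant $c_2$ you would need is not uniform for general non-lattice $\xi$. As written, the argument actually establishes the lemma only when $\eta\lesssim 1/\log n$, not over the full stated range $\eta\in(B_1\log n/n,\,B_0^{-1})$.

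The paper sidesteps Fourier analysis entirely with a real-variable symmetrization argument. One introduces independent copies $\xi_i'$ and Bernoulli signs $\epsilon_i$, defines $\tilde\xi_i$ to equal $\xi_i$ or $\xi_i'$ according to $\epsilon_i$, and chooses $N$ with $x^{N+1}\asymp\gamma$. A moment argument shows $\P(c<|\xi-\xi'|<2c)\ge p_0$ for some constant $c$, and a Chernoff bound then shows that, with probability $1-O(\gamma)$, a $(1-2\delta_0)$-fraction of consecutive length-$t_0$ blocks of $\{0,\dots,N\}$ contain an index with $1<|\xi_i-\xi_i'|<2$. Conditioning on the $\xi_i,\xi_i'$ and on all but one chosen sign per good block, the residual randomness is a Bernoulli-weighted sum whose weights decay geometrically; after thinning the chosen indices to lie in an arithmetic progression of blocks (pigeonhole), the possible values of that sum are $\gamma$-separated, so the conditional small-ball probability is at most $2^{-(1-2\delta_0)k}\le \gamma^{.99}$. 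This argument uses only the spread of $\xi-\xi'$, not any regularity of $\phi_\xi$, and so is automatically uniform over lattice and non-lattice distributions --- precisely the uniformity your Fourier sketch is missing. If you want to salvage the Fourier approach you would need to actually prove the Hal\'asz-type decay for the geometric progression $\{tx^i\}$ against all laws with unit variance and bounded $(2+\epsilon)$-moment; that is a nontrivial piece of work and is not a routine fill-in.
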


In order to prove this theorem, we first need the following elementary claim whose proof is left as an exercise. 

\begin{claim}\label{claim:LO} There exist positive constants $c_0$ and $p_0$ (depending on $\ep$) such that for any $\xi$ of mean 0, variance 1, and bounded $(2+\ep)$-moment, there exists $c_0\le c \le c_0^{-1}$ such that 

$$\P(c < |\xi-\xi' | < 2c) \ge p_0.$$
\end{claim}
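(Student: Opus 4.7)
The plan is to reduce to the symmetrized variable $\eta := \xi - \xi'$ and combine a second-moment truncation with a dyadic pigeonhole. Observe that $\E\eta = 0$, $\E\eta^2 = 2$, and by convexity $\E|\eta|^{2+\ep} \le 2^{2+\ep}\E|\xi|^{2+\ep}$, which is bounded by a constant $M'$ depending only on $\ep$ and on the fixed bound for the $(2+\ep)$-moment of $\xi$.

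First I would establish that a definite fraction of the mass of $\eta^2$ lives at moderate scales. Splitting $\E\eta^2 = 2$ over the three regions $\{|\eta|\le c_0\}$, $\{c_0 < |\eta| < c_0^{-1}\}$, and $\{|\eta|\ge c_0^{-1}\}$, the first contribution is at most $c_0^2$, while for the third one uses $\eta^2 \le c_0^\ep |\eta|^{2+\ep}$ on $\{|\eta|\ge c_0^{-1}\}$ to bound it by $c_0^\ep M'$. Choosing $c_0 > 0$ small enough (depending only on $\ep$ and $M'$) so that $c_0^2 + c_0^\ep M' \le 1$, the middle contribution is at least $1$, and since $\eta^2\le c_0^{-2}$ on that event, this yields $\P(c_0 < |\eta| < c_0^{-1}) \ge c_0^2$.

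Next I would cover the interval $(c_0, c_0^{-1})$ by the $K = O(\log(1/c_0))$ dyadic windows $(2^k c_0, 2^{k+1} c_0)$ and apply pigeonhole to extract an index $k$ with $\P(2^k c_0 < |\eta| < 2^{k+1} c_0) \ge c_0^2/K$. Setting $c := 2^k c_0 \in [c_0, c_0^{-1}]$ and $p_0 := c_0^2/K$ then provides the required scale, and both constants depend only on $\ep$ and $M'$.

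There is no serious obstacle here --- the argument is an entirely standard anti-concentration lemma driven by the variance lower bound together with moment-based tail control. The only subtlety worth noting is that all estimates must be uniform over the admissible class of atoms, but this is automatic since only $\ep$ and the $(2+\ep)$-moment bound appear in either the truncation or the pigeonhole step.
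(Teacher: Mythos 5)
The paper leaves this claim as an exercise, so there is no official proof to compare against; your argument is the natural one, and it is essentially correct. Passing to the symmetrized variable $\eta := \xi-\xi'$, using $\E\eta^2=2$ together with the tail bound $\E[\eta^2;|\eta|\ge c_0^{-1}]\le c_0^{\ep}\E|\eta|^{2+\ep}$ to force a constant fraction of the second moment into the band $c_0<|\eta|<c_0^{-1}$, converting that to probability mass via $\eta^2\le c_0^{-2}$ on the band, and then pigeonholing over $O(\log(1/c_0))$ dyadic windows is the standard anti-concentration argument, and it correctly yields constants depending only on $\ep$ and the assumed $(2+\ep)$-moment bound. (The claim's phrasing ``depending on $\ep$'' implicitly folds the moment bound in, as you observed.)

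One small point worth tightening: the open intervals $(2^kc_0,2^{k+1}c_0)$ do not literally cover $(c_0,c_0^{-1})$ --- they omit the dyadic points $2^kc_0$ --- so if $\eta$ had an atom sitting exactly at such a point the pigeonhole as written could lose mass. This is easy to repair: tile with half-open dyadic intervals $[2^kc_0,2^{k+1}c_0)$ to run the pigeonhole, and then replace the selected window by a slightly shifted open window of ratio $2$ (e.g.\ $(\,(1-\theta)2^kc_0,\,(1-\theta)2^{k+1}c_0\,)$ for a small fixed $\theta$) which still contains the half-open interval up to a second, negligible pigeonhole over $\theta$; alternatively use overlapping windows $(2^{j/2}c_0,2^{j/2+1}c_0)$, $j=0,1,\dots$, which form a genuine open cover at ratio $2$ at the cost of a factor $2$ in the count. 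Either fix is routine and does not change the constants qualitatively.
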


By switching from $\xi$ to $\xi/c$ if needed, without changing the result of Lemma \ref{lemma:ball}, one can assume that 

$$\P(1<|\xi-\xi'|<2)\ge p_0.$$

\begin{proof}[Proof of Lemma \ref{lemma:ball}] Let $\xi'_1,\ldots,\xi'_{n}$ be independent copies of $\xi_1,\ldots,\xi_{n}$, let $\epsilon_1,\ldots,\epsilon_{n} \in \{-1,1\}$ be independent Bernoulli variables (independent of both $\xi_i$ and $\xi'_i$), and let $\tilde \xi_i$ be the random variable that equals $\xi_i$ when $\epsilon_i = +1$ and $\xi'_i$ when $\epsilon_i = -1$.  Then $\tilde \xi_1,\ldots,\tilde \xi_{n}$ have the same joint distribution as $\xi_1,\ldots,\xi_{n}$, so it suffices to obtain the bound

$$ \P( |\sum_{i=0}^n \tilde \xi_i x^i| \leq \gamma ) =O( \gamma^{.99}).$$

Let $\delta_0>0$ be sufficiently small ($\delta_0=.000001$ would suffice) and $t_0$ be such that 

\begin{equation}\label{eqn:t0}
(1-p_0)^{t_0} <\delta_0.
\end{equation}

Let $N$ be chosen so that $x^{N+1}$ is approximately $\gamma$ (such as $2\gamma \le x^{N+1}\le 4\gamma$ would suffice). Notice that as $1-B_0^{-1}\le x \le 1-B_1\log n/n$ with sufficiently large $B_0,B_1$, we have

$$\Omega(\log n) \le N \le n.$$  

Without loss of generality we assume that $N+1$ is divisible by $t_0$. Divide the set $\{0,1,\dots, N\}$ into $m=N/t_0$ intervals $J_1,\dots,J_m$ with $J_1:=[0,\dots,t_0-1], J_2:=[t_0,\dots,2t_0-1],\dots, J_m:=[N-t_0,\dots, N]$. 

Let $J \subset \{1,\ldots, m\}$ be a (random) subset of indices $k$ for which the following holds for at least one index $i$ from $J_k$,

\begin{equation}\label{sss}
 1  < |\xi_{i} -\xi'_{i} | <2.
\end{equation}

By definition, we have 

$$\P(k\in J) \ge 1 - (1-p)^{t_0} \ge 1-\delta_0.$$ 

Let $\CE$ be the event that $|J|\geq m':=(1-2\delta_0)m$. From Chernoff's lower tail bound, one has

\begin{equation}\label{E}
\P(\CE^c) \le 2 \exp(-\frac{\delta_0^2}{2}m).
\end{equation}

As $x\ge 1-B_0^{-1}$ and $B_0$ is sufficiently large, we have

$$\exp(-\frac{\delta_0^2}{2}m)=\exp\big(-\frac{\delta_0^2}{2t_0}(N+1)\big) \le (1-B_0^{-1})^{N+1}\le x^{N+1} = O(\gamma),$$

where we recall that $x^{N+1}$ is approximately $\gamma$.

From now on we condition on $\CE$, thus assuming 
\begin{equation}\label{eqn:m'}
m'\ge (1-2\delta_0)m.
\end{equation}

By considering a subset of $J$ if needed, one can assume that $|J|=m'$. From each interval $J_k$ where $k\in J$, we choose one single index $i\in J_k$ such that \eqref{sss} holds. In what follows we will fix the random variables $\xi_i, \xi_i'$ for all $i$; and the signs $\epsilon_i$ if $i$ was not chosen.

In summary, one obtain subsequences $1\le i_1<\dots < i_{m'} \le m$ and $0\le n_1 < n_2<\dots < n_{m'} \le N$ with the following properties:

\begin{itemize}
\item $1<|\xi_{n_j}-\xi_{n_j}'|<2$;
\vskip .1in
\item $n_j \in J_{i_j}$.
\vskip .1in
\item The (only) source of randomness comes from the sign $\ep_{n_1},\dots, \ep_{n_{m'}}$.
\end{itemize}

Set 

$$y:= x^{t_0}.$$

By definition, as $n_j\in J_{i_j}=[(i_j-1)t_0,\dots, i_j t_0-1]$, one has the following double bound

\begin{equation}\label{eqn:sandwich}
y^{i_j}<x^{n_j} \le y^{i_j-1}.
\end{equation}

As $1- B_0^{-1}  < x < 1-B_1\log n/n$, there is a unique positive integer  $l\le m$ such that 

\begin{equation} \label{eqn:boundxi} 
1/4 < y^{l}< 1/2 \le y^{l-1}. 
\end{equation} 

Furthermore, since $B_0$ is sufficiently large, one has the following elementary bound

\begin{equation}\label{eqn:l}
l\ge 1000.
\end{equation}

Let $k$ be the largest integer such that $(l+2)k\le m$. Thus

\begin{equation} \label{eqn:bounds:y}  
y^{(l+2)k} \ge y^m = x^{t_0 m} = x^{N+1} \ge 2 \gamma  \mbox{ and } y^{(l+2)(k+1)} \le y^m = x^{N+1}  \le 4\gamma. 
\end{equation} 

Again, because $B_0$ is sufficiently large, $y^{l+2}=y^3 y^{l-1}\ge y^3/2 > 1/4$. Thus as $y^{(l+2)(k+2)}\le 4\gamma$, $k$ must have order at least $\Omega(\log n)$. This yields the following elementary bound (assuming $n$ sufficiently large),

\begin{equation}\label{eqn:k}
k\ge 1000.
\end{equation}

Let $S$ be the subset of multiples of $l+2$ in \{0,\dots, m\}, $S:=\{0,l+2, \dots, \lfloor m/(l+2) \rfloor (l+2) \}$. Consider the decomposition $\{0,\dots,m\}$ into $S\cup (S+1)\cup \dots \cup S+(l+1)$. By \eqref{eqn:m'} and by the pigeon hole principle, there exists $i_0\le l+1$ such that 

\begin{equation}\label{eqn:Si_0}
|S+i_0 \cap \{i_1,\dots,i_{m'}\}|\ge (1-2\delta_0) m/(l+2).
\end{equation}

We now work with the partial sum of $x^{n_j}$ with $i_j \in S+i_0$. To do this, we first introduce an elementary property of Bernoulli sums. 

Given a quantity $t>0$, we  say that a  set $X$ of real numbers is {\it $t$-separated } if the distance between any two elements of $X$ is at least $t$.

\begin{claim}\label{claim:separation:1}
The set $ \Big\{\sum_{1\le i \le k} \ep_i y^{i(l+2)}, \ep_i\in \{-1,1\} \Big \} $ is $2y^{k(l+2)}$-separated. 
\end{claim}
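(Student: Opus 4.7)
The plan is to reduce the statement to an elementary separation property of signed geometric series. Setting $r := y^{l+2}$, the set in question becomes $\{\sum_{i=1}^k \ep_i r^i : \ep_i \in \{-1,+1\}\}$, and the claim is that this set is $2r^k$-separated. The first step is to verify that $r \le 1/2$: by \eqref{eqn:boundxi} we have $y^l < 1/2$, and since $0 < y < 1$, this gives $r = y^2 \cdot y^l < y^l < 1/2$.

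For the main step I would take two distinct sign sequences $(\ep_i)$ and $(\ep_i')$ in $\{-1,+1\}^k$ and let $j$ denote the smallest index at which they disagree. At this index $|\ep_j - \ep_j'|=2$, so the triangle inequality together with a direct evaluation of the geometric tail gives
\begin{equation*}
\Big| \sum_{i=1}^k (\ep_i - \ep_i') r^i \Big| \ge 2r^j - 2\sum_{i=j+1}^k r^i = \frac{2 r^j (1 - 2r + r^{k-j+1})}{1-r}.
\end{equation*}
The sought bound $\ge 2r^k$ is then equivalent, after clearing denominators and regrouping, to $(r^j - r^k)(1-2r) \ge 0$, which is automatic from $r \le 1/2$ and $j \le k$.

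There is no genuine obstacle here: the claim is purely deterministic once $r < 1/2$ has been secured, and that constraint is precisely the reason why the exponent $l+2$ (rather than $l$) was built into the definition of $S$ and into the two-sided estimate \eqref{eqn:bounds:y}. Informally, a $\{-1,+1\}$-signed polynomial of degree $k$ in a base $r \le 1/2$ resolves its values at the scale of its lowest-magnitude coefficient, namely $r^k$; this is the only content of the claim, and it is sharp precisely at $r = 1/2$.
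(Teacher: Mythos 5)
Your proof is correct and follows essentially the same line of reasoning as the paper: take two distinct sign sequences, isolate the lowest index of disagreement, and bound the difference from below by the leading term minus the geometric tail, using the crucial bound $y^{l+2} < y^l < 1/2$ from \eqref{eqn:boundxi}. You simply make the algebra (the factorization $(r^j - r^k)(1-2r) \ge 0$) explicit where the paper leaves the final geometric-series estimate terse.
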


\begin{proof}[Proof of Claim \ref{claim:separation:1}]
Assume that there are two terms within distance smaller than $2y^{k(l+2)}$. Consider their difference, which has the form  $2(\ep_{m_1} y^{m_1(l+2)} + \dots+ \ep_{m_j} x_i^{m_j(l+2)})$ for some $m_1<\dots<m_j \le k$. As $y^{l+2} < y^l <1/2$, this difference in absolute value is at least 

 \begin{equation} \label{eqn:difference:comparison}
 2(y^{m_1 (l+2)} -y^{m_2 (l+2)} - \dots- y^{ m_j (l+2)} ) \ge 2y^{k(l+2)},
 \end{equation}
 
 a contradiction.
\end{proof}

By following the same argument,  one obtains the following.

\begin{claim}\label{claim:separation:2}
The set $ \Big\{\sum_{i_j \in S+i_0} \ep_j x^{n_j}, \ep_i\in \{\xi_{n_j},\xi_{n_j}'\} \Big \} $ is $2 y^{i_0} y^{k(l+2)}$-separated. 
\end{claim}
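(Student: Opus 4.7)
The plan is to mimic the proof of Claim \ref{claim:separation:1}. Given two distinct elements of the set, corresponding to coefficient assignments $(a_j^{(1)})$ and $(a_j^{(2)})$ with $a_j^{(i)} \in \{\xi_{n_j}, \xi'_{n_j}\}$, let $T = \{j_1 < j_2 < \ldots < j_r\}$ denote the indices at which these assignments disagree, ordered so that $i_{j_1} < i_{j_2} < \ldots$. Their difference is
\[
\Delta \;=\; \sum_{s=1}^r \sigma_s\,(\xi_{n_{j_s}} - \xi'_{n_{j_s}})\,x^{n_{j_s}},
\]
with each $\sigma_s \in \{-1, +1\}$ recording which assignment chose $\xi$ versus $\xi'$ at index $j_s$. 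Two structural facts are then in hand: each nonzero coefficient satisfies $1 < |\xi_{n_{j_s}} - \xi'_{n_{j_s}}| < 2$ by the selection made at \eqref{sss}; and writing $i_{j_s} = i_0 + (l+2) p_s$ with integers $0 \le p_1 < p_2 < \ldots \le k$, the sandwich estimate \eqref{eqn:sandwich} gives $y^{i_0 + (l+2) p_s} < x^{n_{j_s}} \le y^{i_0 + (l+2) p_s - 1}$.

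\textbf{Key step.} After multiplying $\Delta$ by $-1$ if needed one may assume the leading term is positive, so
\[
|\Delta| \;\ge\; |\xi_{n_{j_1}} - \xi'_{n_{j_1}}|\,x^{n_{j_1}} \;-\; \sum_{s \ge 2} |\xi_{n_{j_s}} - \xi'_{n_{j_s}}|\,x^{n_{j_s}}.
\]
Pulling out the common shift $y^{i_0}$ and using $y^{l+2} \le y^l < 1/2$ from \eqref{eqn:boundxi}, the tail sum is geometrically dominated by the leading term exactly as in Claim \ref{claim:separation:1} (the comparison \eqref{eqn:difference:comparison}), and the announced bound $|\Delta| \ge 2 y^{i_0} y^{k(l+2)}$ follows.

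\textbf{Main obstacle.} The first claim benefits from two clean structural features that are no longer exact here: each nonzero coefficient in that difference was $\pm 2$, whereas here it lies only in $(1, 2)$ in absolute value; and each base was $y^{i(l+2)}$ exactly, whereas here $x^{n_j}$ only sandwiches $y^{i_j}$ within a factor $y^{-1}$. The content of the proof is to verify that the geometric margin created by $y^{l+2} < 1/2$ is wide enough to absorb these two multiplicative slacks. One does this by tuning $B_0$ large so that $y$ is close to $1$ (so $y^{-1}$ is close to $1$) while keeping $y^{l+2}$ strictly below $1/2$, which is exactly what \eqref{eqn:boundxi} and \eqref{eqn:l} were arranged to guarantee. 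Mechanically the verification is a bookkeeping exercise with geometric series, but it is the only point where the analogy with Claim \ref{claim:separation:1} is not purely formal.
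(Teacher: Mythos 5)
Your proposal correctly identifies the two sources of slack that distinguish this claim from Claim~\ref{claim:separation:1}, and that identification is the right instinct. However, the crucial step --- ``the tail sum is geometrically dominated by the leading term exactly as in Claim~\ref{claim:separation:1}\ \ldots\ and the announced bound follows'' --- is asserted rather than verified, and when one actually tries to verify it the geometric margin is \emph{not} wide enough. In Claim~\ref{claim:separation:1} the terms $y^{i(l+2)}$ shrink by a factor $y^{l+2}<1/2$, and every nonzero coefficient is exactly $\pm 2$, so the ratio of consecutive summands is exactly $y^{l+2}<1/2$. Here the ratio of consecutive summands $z_{s+1}/z_s$ (with $z_s=|\xi_{n_{j_s}}-\xi'_{n_{j_s}}|\,x^{n_{j_s}}$) is bounded only by
\[
\frac{z_{s+1}}{z_s}\;<\;\frac{2\,y^{\,i_{j_{s+1}}-1}}{1\cdot y^{\,i_{j_s}}}\;\le\;2\,y^{\,l+1},
\]
the factor $2$ coming from the coefficient spread $1<|\xi-\xi'|<2$ and the factor $y^{-1}$ from the sandwich~\eqref{eqn:sandwich}. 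Since $1/4<y^l<1/2$ by~\eqref{eqn:boundxi}, $y^{l+1}$ can sit arbitrarily close to $1/2$ from below, so $2y^{l+1}$ can be arbitrarily close to $1$; the tail $\sum_{s\ge 2} z_s$ is then not dominated by $z_1$, the quantity $z_1-\sum_{s\ge2}z_s$ can even be negative, and the argument yields no positive lower bound on $|\Delta|$, let alone $2y^{i_0}y^{k(l+2)}$.

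Your proposed fix, ``tuning $B_0$ large so that $y$ is close to $1$,'' does not repair this. The integer $l$ is defined in terms of $y$ precisely so that $1/4<y^l<1/2\le y^{l-1}$; making $y$ closer to $1$ only pushes $l$ up, keeping $y^l$ (hence $y^{l+1}$) near $1/2$. The sandwich slack $y^{-1}$ is indeed tamed this way, but the coefficient slack of up to a factor $2$ is a fixed multiplicative loss independent of $B_0$. To make a ``leading term dominates'' argument work cleanly you would need the ratio $2y^{g-1}$ to be $<1/2$ where $g$ is the spacing in $S$, which forces $g\gtrsim 2l$; but the small-ball bound~\eqref{eqn:bond1} then degrades from $\gamma^{.99}$ to roughly $\gamma^{1/2}$, which is not enough downstream. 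So the gap in your proposal is a real one: you have named the obstacle but not shown how to clear it, and the assertion that the bookkeeping ``works exactly as in Claim~\ref{claim:separation:1}'' is where the proof breaks down. (The paper itself gives only the two preparatory inequalities and the phrase ``by following the same argument,'' so it does not supply the missing verification either; at minimum a sharper version of Claim~\ref{claim:LO} with a narrower interval for $|\xi-\xi'|$, or a more carefully chosen spacing in $S$ combined with a reworking of~\eqref{eqn:bond1}, would be needed.)
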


\begin{proof}[Proof of Claim \ref{claim:separation:2}] Recall that by our conditioning, 

$$1<|\xi_{n_j}- \xi_{n_j}'| <2.$$ 

Furthermore, if $i_j <i_{j'} \in S+i_0$ then $i_{j'} \ge i_j +l+2$. So, by \eqref{eqn:sandwich}

$$\frac{x^{n_{i_{j'}}}}{x^{n_{i_{j}}}} \le \frac{y^{i_{j'}-1}}{y^{i_{j}}} < y^{l} < 1/2.$$
\end{proof}

We now finish the proof of Lemma \ref{lemma:ball}. By Claim \ref{claim:separation:2}, and by the bound \eqref{eqn:Si_0}

\begin{equation}\label{eqn:bound:condition}
\sup_{R \in \R} \P\Big(|\sum_{i_j \in S+i_0} \tilde \xi_{n_j} x^{n_j}+R| \le 2 y^{i_0}y^{k (l+2)}\Big)\le 2^{-(1-2\delta_0)k}.
\end{equation}

Using $2y^{i_0} y^{k(l+2)}\ge 2 y^{(k+1)(l+2)}\ge \gamma$, we obtain

\begin{equation}\label{eqn:bound:condition}
\sup_{R \in \R} \P\Big(|\sum_{i_j \in S+i_0} \tilde \xi_{n_j} x^{n_j}+R| \le \gamma\Big)\le 2^{-(1-2\delta_0)k}.
\end{equation}

Consider the probability bound on the RHS.  Notice from  \eqref{eqn:l}  and \eqref{eqn:k} that both $k$ and $l$ are at least $1000$. So, $(l-1)k \ge .999 (k+1)(l+2)$. Thus, with $\delta_0=.000001$ and recall that $1/2\le y^{l-1}$

\begin{equation} \label{eqn:bond1} 
2^{-(1-2\delta_0)k}\le y^{(1-2\delta_0)(l-1)k} \le y^{(1-2\delta_0).999(k+1)(l+2)} \le    \gamma^{.99},
\end{equation} 

where we used \eqref{eqn:bounds:y} in the last estimate, assuming $n$ sufficiently large.
\end{proof}

We now complete the proof of our main result.

\begin{proof}[Proof of Lemma \ref{lemma:dbroot}]
Since there are  less than  $\delta^{-1}$ intervals, it follows from Lemma \ref{lemma:ball} and by the union bound,
 
$$\P(\exists i, |P(x_i)|\le  \gamma )  \le \delta^{-1} \gamma^{.99} = \delta^{.98} n^{.99(4+C/2)} = o( n^{-.98 A + 4+C/2}) . $$  

By setting $A := 3C+6$ and recall our choice $B=  A-C/2-2$, we have 

$$\P\Big( \exists x\in (1- B_0^{-1}, 1 - \frac{ B_1 \log n}{n} ]: P_n(x)=0, |P_n'(x)| \le n^{-5C/2 -4}\Big) = o(n^{-C}),$$

proving the desired statement. 
  
\end{proof}

 \begin{remark}
 It follows from our proof that instead of having bounded $(2+\ep)$-moment, it suffices to assume that there exist positive constants $c_1,c_2$ and $p$ such that 
 
 $$\P(c_1<|\xi-\xi'|<c_2)\ge p.$$  
 \end{remark}

\begin{remark}\label{remark:stronger} 
We can also extend our argument, with few modifications, to show the non-existence of near double roots in $(1-B_0^{-1}, 1]$ for general $\xi$, and in  the whole spectrum for Bernoulli polynomials; details will follow in a subsequent paper. 
\end{remark}

Using a similar  argument (with the same definition of $\delta$ and $I$)  we can prove the following.

\begin{lemma}  \label{lemma:distance}  For any constant $C >1$,  the following holds with probability  $1 -o(n^{-C})$. 

\begin{itemize} 

\item There is no pair of roots in $I=(1-B_0^{-1},1-B_1\log n/n)$ with distance at most $\delta =2 n^{-3C-6}$. 
\vskip .1in
\item For any given $a \in I$, there is no root with distance at most $\delta' := n \delta^2$ from $a$. 
\end{itemize} 
\end{lemma}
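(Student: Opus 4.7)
The plan is to mimic the proof of Lemma~\ref{lemma:dbroot}: convert the relevant geometric closeness (of two roots in Part~1, or of a root to a fixed point in Part~2) into algebraic smallness of $P_n$ at a deterministic point, then apply the small-ball estimate of Lemma~\ref{lemma:ball}, together with a union bound when needed.

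For the first statement, suppose two distinct roots $x_1 < x_2$ of $P_n$ lie in $I$ with $x_2 - x_1 \le \delta$. Rolle's theorem provides $y \in (x_1, x_2)$ with $P_n'(y) = 0$. Cover $I$ by $O(\delta^{-1})$ overlapping subintervals of length $O(\delta)$ with centers $x_Q$, arranged so that any two points of $I$ within distance $\delta$ lie in a common subinterval. If $x_Q$ is the center of such a subinterval containing $x_1, x_2, y$, two applications of the mean value theorem give
\[ |P_n(x_Q)| = |P_n(x_Q) - P_n(x_1)| \le O(\delta)\,|P_n'(u)| = O(\delta)\,|P_n'(u) - P_n'(y)| \le O(\delta^2)\,|P_n''(w)| \]
for some $u, w$ near $x_Q$. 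On the event $\{\max_i |\xi_i| \le n^{C/2+1}\}$, whose complement has probability $O(n^{-C-1})$ by Markov, one has $|P_n''(w)| \le n^{4+C/2}$, so $|P_n(x_Q)| \le O(\delta^2 n^{4+C/2}) = O(\gamma)$ with $\gamma$ as in Lemma~\ref{lemma:dbroot}. Lemma~\ref{lemma:ball} at each center, together with a union bound over the $O(\delta^{-1})$ centers, reproduces exactly the calculation at the end of the proof of Lemma~\ref{lemma:dbroot} and gives probability $o(n^{-C})$ with the same choice $\delta = 2n^{-3C-6}$.

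For the second statement, fix $a \in I$. If some root $x$ of $P_n$ satisfies $|x - a| \le \delta'$, the mean value theorem gives $|P_n(a)| = |P_n(a) - P_n(x)| \le \delta'\,|P_n'(u)|$ for some $u$ near $a$. On the same high-probability event, $|P_n'(u)| \le n^{C/2+3}$, hence
\[ |P_n(a)| \le n\delta^2 \cdot n^{C/2+3} = O(n^{-11C/2 - 8}). \]
Since $a$ is a fixed deterministic point, Lemma~\ref{lemma:ball} applied at $x = a$ bounds the probability of this event by $O(n^{-0.99(11C/2 + 8)}) = o(n^{-C})$; no union bound is required.

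The main technical input, Lemma~\ref{lemma:ball}, is already in hand, so the new work reduces to routine Rolle/MVT manipulations. The only point requiring care is the union bound over $O(\delta^{-1})$ centers in Part~1, but this loss is comfortably absorbed by the $\gamma^{0.99}$ factor in the small-ball bound precisely because the exponent $\delta = 2n^{-3C-6}$ was already tuned for this balance in the proof of Lemma~\ref{lemma:dbroot}.
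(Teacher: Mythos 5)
Your proposal is correct and matches the paper's proof essentially step for step: both parts reduce, via Rolle's theorem and the mean value theorem, to smallness of $|P_n|$ at a deterministic point in a $\delta$-net (Part 1) or at the given point $a$ (Part 2), and then invoke Lemma~\ref{lemma:ball} with the identical $\gamma = 2\delta^2 n^{4+C/2}$ and union bound from Lemma~\ref{lemma:dbroot}. Your explicit remark that no union bound is needed in Part~2 is a minor clarification of the paper's slightly terse phrasing, but the argument is the same.
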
 

\begin{proof} [Proof of Lemma \ref{lemma:distance}]

For the first statement, we can fix a $\delta$-net $S =\{x_1, \dots, x_M \}$ on $I$  such that for any $x \in I$, there is some $x_i \in S$ with distance at most $\delta$ to $x$ and $M \le \delta^{-1} +1$.

If $P_n (x) =P_n (x') =0$, then there is a point $y$ between $x$ and $x'$ such that $P_n'(y)=0$. Thus, for any $z$ with distance at most $2\delta$ from $y$,  

$$|P'(z)| \le 2 n^{4+C/2} \delta.$$ 

There is a point $x_i$ in the net such that $|x_i-x| \le \delta$. For this $x_i$,  $|P_n(x_i)| = |x_i-x| | P'_n (z)| $ for some $z$ between $x$ and $x_i$. Because $x$ has distance at most $\delta$ from $x'$, $x$ also has distance at most  $\delta$ from $y$, and so $z$ has distance at most $2\delta$ from $y$. It follows that

$$|P_n (x_i) |   \le 2 n^{4+C/2} \delta ^2.$$

From the previous proof, the probability that the above bound holds for some $i$ is $o(n^{-C})$.

For the second statement, assume that $P_n (x) =0$ and $|a-x| \le \delta' $, then $|P_n(a) | = |a-x| |P_n' (y)| $ for some $y$ between $a$ and $x$. On the other hand,  with a loss of $n^{-C-1}$ in probability, one can assume that $|P_n'(y) | \le n^{3+C/2}$ for any $y \in [0,1]$, it follows that $|P_n (a)| \le n^{3+C/2} \delta' = n^{4+C/2} \delta^2$, using the notation in the previous proof. But again the previous proof  provides  that $\P (\exists a\in I, | P_n (a)| \le n^{4+C/2} \delta^2 ) = o(n^{-C})$ .

\end{proof}

\section{Near Double roots and Truncation} 

First of all, we need to truncate the random variables $\xi_0,\dots, \xi_n$. Let $d>0$ be a parameter and let $\CB_d$ be the event $|\xi_0|<n^d \wedge \dots \wedge |\xi_n|<n^d$. As $\xi$ has unit variance, we have the following elementary bound 

$$\P(\CB_d^c)\le n^{1-2d}.$$

In what follows we will condition on $\CB_d$ with $d=2$.

Consider $P_{n}(x) = \sum_{i=0} ^n \xi_i x^i$ and  for $m <n$, we set 

$$g_{m} := P_n -P_m =\sum_{i=m+1} ^n \xi_i x^i.$$  

For any $0< x \le 1-r$, Chernoff's bound yields that  for any $\lambda >0$

$$\P \Big(|g_m(x) | \ge \lambda n^2 \sqrt  { \sum_{i=m +1}^n (1-r)^{2i} }\bigg| \CB_2 \Big ) \le \P \Big (|g_m(x)|  \ge \lambda n^2 \sqrt  { \sum_{i=m}^n x^{2i} }\bigg| \CB_2\Big)  \le 2 \exp( -\lambda^2/2 ). $$

Since  $$\sum_{i=m+1}^n (1-r)^{2i} \le (1-r)^{2m+2 } \frac{1}{1 -(1-r)^2 } := s(r,m),  $$ it follows that 


\begin{equation}\label{eqn:gT}
\P (|g_m| \ge  \lambda n^2 \sqrt{s(r,m)}|\CB_2 ) \le 2 \exp (-\lambda^2/2 ). 
\end{equation}

We next compare the roots of 
$P_n$ and $P_m$ in the interval $(0, 1-r)$. Our intuition is that if $s(r,T)$ is sufficiently small, then there is an bijection $\phi$  between the two sets of roots
such that $x$ and $\phi(x)$ are very close. In particular, the numbers of roots  of two polynomials in this interval are the same with high probability.

\begin{lemma}  \label{approximation} 
 Assume that $F (x) \in C^2(\R)$ and $G(x)$ are continuous functions satisfying the following properties
 
 \begin{itemize}
 \item $F(x_0) =0$ and $|F'(x_0)| \ge \epsilon_1$;
 \vskip .1in
 \item  $|F^{''} (x)  | \le M$ for all $x \in I := [x_0 -\epsilon_1 M^{-1} , x_0 +\epsilon_1 M^{-1} ]$; 
 \vskip .1in
 \item $\sup_{x \in I} | F(x)- G(x) | \le \frac{1}{4} \epsilon_1^{2} M^{-1}$. 
 \end{itemize}
 Then $G$ has a root in $I$.
\end{lemma}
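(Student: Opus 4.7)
The plan is to apply the intermediate value theorem to $G$ on the closed interval $I$, after using a Taylor expansion of $F$ at $x_0$ to show that $F$ takes values of opposite sign and of magnitude bounded below by twice the sup-norm error at the two endpoints of $I$; the perturbation bound then forces $G$ to inherit those opposite signs.

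More concretely, denote the two endpoints of $I$ by $x_\pm := x_0 \pm \epsilon_1 M^{-1}$. Since $F \in C^2$, a second-order Taylor expansion about $x_0$ together with $F(x_0)=0$ gives
\[
F(x_\pm) = \pm \epsilon_1 M^{-1} F'(x_0) + \tfrac{1}{2}(\epsilon_1 M^{-1})^2 F''(\zeta_\pm)
\]
for some $\zeta_\pm \in I$. Using $|F'(x_0)| \ge \epsilon_1$ on the linear term and $|F''(\zeta_\pm)| \le M$ on the quadratic term yields the lower bound
\[
|F(x_\pm)| \ge \epsilon_1^2 M^{-1} - \tfrac{1}{2}\epsilon_1^2 M^{-1} = \tfrac{1}{2}\epsilon_1^2 M^{-1},
\]
and, because the linear term dominates the quadratic correction, the signs of $F(x_+)$ and $F(x_-)$ are opposite (namely, $\operatorname{sgn} F(x_\pm) = \pm \operatorname{sgn} F'(x_0)$).

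Now invoke the hypothesis $\sup_{x \in I} |F(x) - G(x)| \le \tfrac{1}{4}\epsilon_1^2 M^{-1}$. Since the error is at most half of the lower bound just established for $|F(x_\pm)|$, we obtain $\operatorname{sgn} G(x_+) = \operatorname{sgn} F(x_+)$ and $\operatorname{sgn} G(x_-) = \operatorname{sgn} F(x_-)$, so $G(x_+)$ and $G(x_-)$ have opposite signs. By continuity of $G$ and the intermediate value theorem, $G$ has a root in $I$.

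There is essentially no obstacle here: the statement is a quantitative Rouché/IVT-style perturbation lemma, and everything reduces to comparing the three scales $\epsilon_1^2 M^{-1}$ (value of $F$ at endpoints), $\tfrac{1}{2}\epsilon_1^2 M^{-1}$ (Taylor remainder), and $\tfrac{1}{4}\epsilon_1^2 M^{-1}$ (perturbation), which are set up precisely so that the first dominates the sum of the other two. The only mild check is that the point $\zeta_\pm$ produced by Taylor's theorem lies in $I$, which is automatic since $I$ is the interval of radius $\epsilon_1 M^{-1}$ about $x_0$ on which the bound $|F''|\le M$ is assumed.
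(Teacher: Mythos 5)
Your proof is correct and takes essentially the same approach as the paper's: a quantitative perturbation/IVT argument that uses $|F''|\le M$ to control $F$ away from $x_0$ and $|F'(x_0)|\ge\epsilon_1$ to push $F$ past the sup-norm error. The paper instead normalizes WLOG $G(x_0)\ge 0$, splits into cases on the sign of $F'(x_0)$, and applies the mean value theorem twice to a one-sided subinterval of radius $\tfrac12\epsilon_1 M^{-1}$; your two-sided Taylor expansion at the endpoints $x_\pm$ packages the same estimates slightly more cleanly and gives a comfortable factor-of-two margin in the constants.
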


\begin{proof}[Proof of Lemma \ref{approximation}] 
  We can assume, without loss of generality, that 
$G(x_0) \ge 0$.   Consider two cases:

 {\bf Case 1.}  
 $F'(x_0)  \ge \epsilon_1 $. Using the bound $|F^{''} (x)| \le M$ and the mean value theorem, it follows that $F' (x) \ge \frac{1}{2} \epsilon_1$ for all $x$ satisfying $x_{-} := x_0 - \frac{1}{4} \epsilon_1 M^{-1}  \le x \le x_0$. It follows that 
$F(x_{-} ) \le - \frac{1}{4} \epsilon_1^2  M^{-1} $.  Thus, $G(x_{-})  \le 0$ and so $G$ must have a root between $x_{-}$ and $x_0$.

{\bf Case 2.} $F'(x_0) \le - \epsilon_1 $. Arguing similarly, we can prove that $G$ has a root between $x_0$ and $x_{+} := x_0 + \frac{1}{4} \epsilon_1 M^{-1}$. 
\end{proof}

By combining Lemma \ref{approximation} and Lemma \ref{lemma:dbroot}, we obtain the following key observation.

Set $B:=\max(B_1, B(2),8)$, where $B_1,B(2)$ are the constants from Theorem \ref{lemma:dbroot} and Theorem \ref{lemma:distance} corresponding to $C=2$.

\begin{lemma}[Roots comparison for truncated polynomials]\label{lemma:comparison} Let $r \in (B_1 \log n/n, B_0^{-1}]$ and $m=4B r^{-1} \log n$. Then for any  subinterval $J$ of $(1-B_0^{-1},1-r)$ one has

\begin{equation} \label{Tn3} | \E N_n J  - \E N_m J |   \le  m^{-1} . 
\end{equation} 
\end{lemma}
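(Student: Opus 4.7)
The plan is to couple $P_n$ and $P_m$ on a high-probability \emph{good event} $\CE$ on which they share the same roots in $J$ (up to a negligible perturbation), and to absorb the complement into the $m^{-1}$ error term. Concretely, I will set $\CE := \CB_2 \cap \CE_{\text{dbl}} \cap \CE_{\text{sep}} \cap \CE_{\text{unif}}$, where: $\CB_2$ is the truncation event (so $|P_n''|,|P_m''|\le n^{O(1)}$ on $[0,1]$); $\CE_{\text{dbl}}$ is the conclusion of Lemma~\ref{lemma:dbroot} applied with $C=2$ to both $P_n$ and $P_m$, giving $|P_n'(x_0)|,|P_m'(x_0)|\ge n^{-B}$ at every root $x_0 \in (1-B_0^{-1}, 1 - B_1\log n/n]$; $\CE_{\text{sep}}$ is the conclusion of Lemma~\ref{lemma:distance} with $C=2$ for both polynomials (no two roots within $\delta=2n^{-3C-6}$, and no root within $\delta'=n\delta^2$ of a prescribed point); and $\CE_{\text{unif}}$ is the uniform comparison $\sup_{x\in J}|g_m(x)| \le n^{-4B+3}$, which I obtain by applying \eqref{eqn:gT} with $\lambda = 10\sqrt{\log n}$ at an $n^{-10}$-net on $J$ and interpolating with the Lipschitz estimate on $g_m$ valid on $\CB_2$. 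A quick check shows $s(r,m)\le n^{-8B}/r$ and $r\ge B_1\log n/n$, so $n^2\sqrt{s(r,m)} \le n^{5/2-4B}$, giving the required uniform bound off an event of probability $n^{-\omega(1)}$. Altogether $\Pr(\CE^c)=o(n^{-2})$.

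On $\CE$ I will set up a root-by-root bijection between the real roots of $P_m$ and of $P_n$ in $J$. Take $\epsilon_1:=n^{-B}$, $M:=n^5$, so that $\frac14 \epsilon_1^2 M^{-1} = \frac14 n^{-2B-5}$. Since $B\ge 8$, our uniform bound gives $\sup_{x\in J}|P_n-P_m| = \sup|g_m| \le n^{-4B+3} \le \frac14\epsilon_1^2 M^{-1}$. For any root $x_0$ of $P_m$ in $J$, Lemma~\ref{approximation} applied with $F=P_m$, $G=P_n$ produces a root of $P_n$ within $\epsilon_1 M^{-1} = n^{-B-5}$ of $x_0$; swapping $F$ and $G$ gives the reverse correspondence. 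Uniqueness in each small window comes from Lemma~\ref{lemma:distance}, which forbids two roots of the same polynomial within $\delta \gg n^{-B-5}$.

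The remaining step is \emph{averaging}. On $\CE$ the bijection implies $N_n(J)=N_m(J)$ exactly (modulo the boundary caveat below). Off $\CE$, the trivial bound $N_n(J)+N_m(J)\le n+m\le 2n$ yields
\begin{equation*}
\bigl|\E N_n J - \E N_m J\bigr| \le 2n\cdot\Pr(\CE^c) = o(n^{-1}).
\end{equation*}
Since $m=4Br^{-1}\log n \le 4Bn/B_1$, we have $m^{-1}=\Omega(n^{-1})$, so this is bounded by $m^{-1}$ for $n$ large enough.

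The main obstacle I anticipate is the \emph{boundary effect} for the bijection: a root of $P_m$ lying within $n^{-B-5}$ of an endpoint of $J$ could be paired with a root of $P_n$ sitting just outside $J$, producing a spurious discrepancy. I will handle this by invoking the second conclusion of Lemma~\ref{lemma:distance} at the two endpoints of $J$, which on $\CE$ forbids any root of $P_n$ or $P_m$ within $\delta' = n\delta^2 \gg n^{-B-5}$ of $\partial J$. A secondary technical point is upgrading \eqref{eqn:gT}, which is pointwise, to a supremum bound over $J$; this is where the net-plus-Lipschitz argument using $|g_m'(x)|\le n^{O(1)}$ on $\CB_2$ enters.
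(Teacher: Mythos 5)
Your high-level plan (compare $P_n$ with a truncation $P_m$ via the near-double-root lemma and Lemma~\ref{approximation}, then absorb the bad event into the error) is the right one and matches the paper's. But there is a genuine gap in the probability bookkeeping that causes the argument to fail precisely in the regime where it matters most, namely when $m$ is much smaller than $n$.

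You claim $\Pr(\CE^c)=o(n^{-2})$. The contributions from Lemmas~\ref{lemma:dbroot} and \ref{lemma:distance} applied to $P_n$ are indeed $o(n^{-2})$, but these lemmas applied to $P_m$ (which $\CE_{\text{dbl}}$ and $\CE_{\text{sep}}$ include) only give $o(m^{-2})$: the polynomial $P_m$ has degree $m$, and the lemmas' failure probabilities scale with the degree of the polynomial, not with $n$. Since $r$ can be as large as $B_0^{-1}$, the degree $m=4Br^{-1}\log n$ can be as small as $\Theta(\log n)$, making $o(m^{-2})$ astronomically larger than $o(n^{-2})$. With the correct estimate $\Pr(\CE^c)=o(m^{-2})+n^{-3}$, your final step gives $(n+m)\Pr(\CE^c)\sim n\cdot o(m^{-2})$, which when $m=\Theta(\log n)$ is of order $n/(\log n)^2\cdot o(1)$ --- not remotely bounded by $m^{-1}=\Theta(1/\log n)$. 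So the symmetric ``one good event, crude bound $N_n+N_m\le 2n$'' scheme does not close.

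The paper avoids this by running the two directions of the comparison separately, each with its own tailor-made bad event matched to the right multiplier. To show $\E N_m J\ge \E N_n J-m^{-1}$, one only needs the near-double-root and separation conditions for $P_n$ (to push each root of $P_n$ to a nearby, distinct, interior root of $P_m$); these cost probability $o(n^{-2})+n^{-3}$, and the loss is multiplied by $N_n J\le n$, giving $n\cdot(o(n^{-2})+n^{-3})\le n^{-1}\le m^{-1}$. To show $\E N_n J\ge \E N_m J-m^{-1}$, swap the roles: now one needs the lemmas for $P_m$, which cost $o(m^{-2})+n^{-3}$, but the loss is multiplied by $N_m J\le m$, giving $m\cdot(o(m^{-2})+n^{-3})\le m^{-1}$. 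The multiplier and the failure probability are always attached to the same polynomial; intersecting the two events and paying the larger multiplier for both, as you do, destroys this matching. You should restructure your proof into these two one-sided inequalities.

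A secondary, minor issue: your interpolation step is under-calibrated. On $\CB_2$ the Lipschitz constant of $g_m$ on $[0,1]$ is of order $n^4$, so an $n^{-10}$-net contributes an interpolation error of order $n^{-6}$, which dwarfs the claimed uniform bound $n^{-4B+3}\le n^{-29}$ (recall $B\ge 8$). You need a much finer net (of mesh at most $n^{-4B-10}$, say), and with that many net points you will want $\lambda$ of order $\log n$ rather than $\sqrt{\log n}$ so the union bound still yields a superpolynomially small failure probability.
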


\begin{proof}[Proof of Lemma \ref{lemma:comparison}] 


Condition on $\CB_2$, one has $\sup_{|x|\le 1}\max(|P_n^{''} (x)|, |P_m^{''} (x)|) \le n^5$ with probability one. Set  $\lambda := \log n$,  by \eqref{eqn:gT}, with probability at least $1 - 2\exp( -\log^2 n/2 ) \ge 1 - n^{-\omega (1) }$ the following holds


$$|P_n (x)- P_m(x) | \le \lambda n^2 \sqrt{s(r, m)} = \lambda n^2 (1-r)^{m+ 1} \frac{1}{\sqrt{1 -(1-r)^2} } \le n^{-3B}  $$  

for all $0 \le x \le 1-r$.

By Lemma \ref{lemma:dbroot} (with $C=2$), $|P_n '(x) | \ge n^{-B}$ for all $x\in J$ with probability $1-o( n^{-2})$. Applying Lemma \ref{approximation} with $\epsilon_1 =n^{-B}, M= n^5$, $F= P_n, G = P_m$, we conclude that with probability $1-o(n^{-2})$, for any root $x_0$ of $P_n(x)$ in the interval $(1- B_0^{-1}, 1-r)$ (which is a subset of $(1-B_0^{-1}, 1-B_1 \log n/n)$), there is a root $y_0$ of $P_m (x)$ such that $|x_0 -y_0| \le \epsilon_1 M^{-1}=  n^{-B-5}$.

On the other hand, applying  Lemma \ref{lemma:distance} with $C=2$, again with probability $1 -o(n^{-2})$ there is no pair of roots of $P_n$ in $J$ with distance less than $n^{-B}$. It follows that  for different roots $x_0$ we can  choose different roots $y_0$. Furthermore, by the second part of  Lemma \ref{lemma:distance}, with probability $1-o(n^{-2})$, all roots of 
$P_n(x)$ must be of distance at least $n^{-B} $ from the two ends of the interval.  If this holds, then all $y_0$ must also be inside the interval. 
This implies that with probability at least $1 -o(n^{-2})$,   the number of roots of $P_m$ in $J$  is at least that of $P_n$. Putting together, we obtain 

\begin{equation} \label{Tn1} \E N_m J  \ge \E N_n J -  (o(n^{-2})+n^{-3}) n  \ge \E N_n J - n^{-1} , \end{equation} where the extra term $n^{-1}$ comes from the fact that $P_n$ has at most $n$ real roots.

Switching the roles of $P_n$ and $P_m$, noting that as $r=4B\log n/m \ge B_1\log m/m$, 

$$J \subset (1- B_0^{-1}, 1-r) \subset (1-B_0^{-1}, 1-B_1 \log m/m).$$ 

As such, Lemmas \ref{lemma:dbroot} and \ref{lemma:distance} are also applicable to $P_m(x)$. Argue similarly as above, we also have

\begin{equation} \label{Tn2} \E N_n J  \ge \E N_m J -  (o(m^{-2})+n^{-3}) m  \ge  \E N_m J - m^{-1}. \end{equation} 

It follows that 

$$| \E N_n J  - \E N_m J |   \le   m^{-1}.$$

\end{proof}


\begin{remark}
By applying Lemma \ref{lemma:dbroot} and Lemma \ref{lemma:distance} to higher values of $C$, with sufficiently large $B_0$ and $B_1$ one obtains the following bound for any interval $J$ of $(1-B_0^{-1},1-r)$,
\begin{equation}\label{Tn4}
| \E N_n J  - \E N_m J | \le m^{-C}.
\end{equation}
However, in later application $C=1$ would be sufficient.
\end{remark}

\begin{remark}
If one can show Lemma \ref{lemma:dbroot} for $B_0=1$, then Lemma \ref{lemma:comparison} is true for any $J\subset (0,1-r)$.
\end{remark}

\section{Proof of Theorem  \ref{theorem:main1}} 

We first prove \eqref{Kac11}. Let $C_0 =\max(C_1,B_0^{1/\ep})$ where $C_1$ is the constant in  Theorem \ref{theorem:TV} and let $C$ be any number greater than $C_0$.

Let $\ep>0$ be a small constant to be chosen. Set $n_0:=n, r_0 = n^{-\ep}$ and define recursively 

$$n_i :=  4B  r_{i-1}^{-1}  \log n_{i-1}, \mbox{ and } r_i := n_i^{-\ep}, i\ge 1.$$ 

It is clear that $\{n_i\}$ and $\{r_i\}$ are respectively decreasing and increasing sequences . Let $L$ be the largest index such that $n_L \ge C$. By definition, $r_i\le B_0^{-1}$ for all $1\le i\le L$. Also, as $C > n_{L+1} = 4B n_L^{\ep} \log n_L\ge n_L^{\ep}$, it follows that $n_L <  C ^{1/ \epsilon}$. Thus,

\begin{equation}\label{b2}
n_L \in[C,   C ^{1/ \epsilon}].
\end{equation}

Set $I_i := (1-r_i, 1-r_{i-1}]$ (with the convention that $r_{-1} =0$). Because $I_i \subset (1-B_0^{-1}, 1-r_{i-1}]\subset (1-B_0^{-1}, 1-r_{j-1}]$ for $1\le j\le i$, by \eqref{Tn3}, 

$$|\E N_{n_{j-1} } I_i  -\E N_{n_{j} } I_i  |  \le n_{j-1}^{-1} . $$

By the triangle inequality,

\begin{equation}\label{eqn:triangle:1}
|\E N_{n_0} I_i  -\E N_{n_{i} } I_i  |  \le \sum_{j=1}^i n_{j-1}^{-1} \le 2 n_{i-1}^{-1}. 
\end{equation}

On the other hand, as $n_i \ge C_1$ for $i\le L$, by Theorem \ref{theorem:TV} 

\begin{equation}\label{eqn:triangle:2}
|\E N_{n_{i} } I_i - \E N_{n_{i} , N(0,1)} I_i  | \le n_i^{-\alpha} . 
\end{equation}

Combining \eqref{eqn:triangle:1} and \eqref{eqn:triangle:2}, one obtains

\begin{equation}\label{eqn:triangle:3}
|\E N_{n_0} I_i  -\E N_{n_0,N(0,1) } I_i  |  \le 2n_{i-1}^{-1} + n_i^{-\alpha}. 
\end{equation}

Let $I =\cup_{i=0}^L I_i$, again by the triangle inequality

$$|\E N_n I - \E N_{n, N(0,1)} I |   \le 2 \sum_{i=0}^n n_i^{-1} +  \sum_{i=0}^n n_i^{-\alpha}. $$

The left end point of $I$ is $1 -n_L^{-\epsilon} = 1 -{C'} ^{-1}$, where $C' := n_L ^{\epsilon} \in [C^{\ep}, C]$ by \eqref{b2}. Furthermore, by definition of the $n_i$, it is easy to show that

$$\sum_{i=0}^L n_i^{-\alpha } \le 2 n_L^{-\alpha} =o( {C'} ^{-1} ),$$ 

assuming (without loss of generality) that $\epsilon < \alpha/2$. 

Thus, we can conclude  that there exists $C' \in  [C^{\ep}, C]$ such that for 
$I: =(1- {C'} ^{-1} ,1 ]$,

$$| \E N_n I - \E N_{n, N(0,1)} I | \le {C'} ^{-1} ,$$ concluding the proof of \eqref{Kac11}.


\begin{remark}\label{rmk1}
Notice that Theorem \ref{theorem:TV} holds for any subinterval of the form $(1-r,a)$ where $r \le n^{-\epsilon}$. Thus, one can prove the same bound for $I$ being any subinterval of $(1- {C'}^{-1} ,1 ]$ by setting $I_i := (1-r_i, 1-r_{i-1}] \cap I$ in the above argument. As a consequence, by choosing $C=C_0$ and $ I=(1- {C_0}^{-1} ,1 ]$ one obtains 

$$| \E N_n (1- {C_0}^{-1} ,1 ] - \E N_{n, N(0,1)} (1- {C_0}^{-1} ,1 ] | \le  {C'}^{-1}\le C_0^{-\ep},$$

proving \eqref{Kac10}.
\end{remark}

\section{Proof of Lemma \ref{fact:edge}}

\subsection{Justification of \eqref{Kac11-0}}

%
%
%
%
%
%
%
%
%
%
%
%
%
%

We follow the approach  developed in  \cite{IM2}.
First, there exist some constants $q_1, c\in (0, 1)$ depending only on $\ep$ and $T$, where $T$ is an upper bound of $\E|\xi|^{2+\ep}$, such that $\P(|\xi|\le c)= q \le q_1< 1$. Indeed, put $p = \P(|\xi|> c)$, then

\begin{eqnarray}
1 = \E|\xi|^{2}&=&\E\left(|\xi|^{2}, |\xi|\le c\right)+\E\left(|\xi|^{2}, |\xi|>c\right)\nonumber\\
&\le& c^{2} + \E\left(|\xi|^{2+\epsilon}\right)^{\frac{2}{2+\epsilon}}\P\left(|\xi|>c\right)^{\frac{\epsilon}{2+\epsilon}}\nonumber\\
&\le& c^{2} + p^{\frac{\epsilon}{2+\epsilon}}T^{\frac{2}{2+\epsilon}}.\nonumber
\end{eqnarray}

Thus, by choosing $c$ small, we get $p$ greater than some positive amount.\\

Next, let

$$B_k = \Big \{\omega: \ab{\xi_0}\le c,\dots, \ab{\xi_{k-1}}\le c, \ab{\xi_k}>c \Big \}, \mbox{ where } k = 0, \dots, n+1.$$


Then $\P(B_k) = (1-q)q^{k}$.
Note that if $P_n$ has $N$ zeros in $[-1+\frac{1}{C}, 1-\frac{1}{C}]$ then $P_n^{(k)}$ has at least $N-k$ zeros in that interval. Thus, 

\[N_{P_n}{[-1+\frac{1}{C}, 1-\frac{1}{C}]}\le k +N_{P_n^{(k)}}{ [-1+\frac{1}{C}, 1-\frac{1}{C}]}.\]

By Jensen's inequality for $P_n^{(k)}$, 

\begin{eqnarray}
N_{P_n}{[-1+\frac{1}{C}, 1-\frac{1}{C}]}\le k +\frac{\log \frac{M_k}{P_n^{(k)}(0)}}{\log \frac{R}{r}}, \nonumber
\end{eqnarray}

where $R = 1 - {\frac{1}{2C}}, r = 1-\frac{1}{C}$, and $M_k = \sup _{|z| = R} \ab{P_n^{(k)}(z)}$. 

Conditioned on $B_k$, we have 

$$ P_n^{(k)}(0) = k!\ab{\xi_k}> k!c, \mbox{ and } M_k\le \sum_{j=k}^{n}j(j-1)\dots(j-k+1)|\xi_j|R^{j-k}.$$ 

Thus, on $B_k$, 
\begin{eqnarray}
N_{P_n}{[-1+\frac{1}{C}, 1-\frac{1}{C}]}&\le& k +\frac{\log \frac{\sum_{j=k}^{n}j(j-1)\dots(j-k+1)|\xi_j|R^{j-k}}{k!c}}{\log \frac{R}{r}}\nonumber\\
&=&k +\frac{\log \frac{\sum_{j=k}^{n}c_{jk}|\xi_j|}{c}}{\log \frac{R}{r}}\nonumber,
\end{eqnarray}

where 

\begin{equation}\label{eqn:c_{jk}}
c_{jk} = j(j-1)\dots(j-k+1)R^{j-k}/k!.
\end{equation}

So,

\begin{eqnarray}
\E N_n{[-1+\frac{1}{C}, 1 - \frac{1}{C}]}&\le& \sum_{k=0}^{n+1} k\P(B_k) + \frac{1}{\log\frac{R}{r}} \sum_{k=0}^{n+1} \int_{B_k} \log\left(\sum_{j=k}^{n}c_{jk}|\xi_j|\right)\text{d}\P - \frac{\log c}{\log\frac{R}{r}}\sum_{k=0}^{n+1} \P(B_k).\nonumber
\end{eqnarray}

Since $\sum_{k=0}^{n+1} k\P(B_k) = (1-q)q\sum_{k=0}^{\infty}kq^{k-1}=\frac{q}{1-q}\le\frac{q_1}{1-q_1}$, and $\log \frac{R}{r} =\log\left(1 + \frac{1/2C}{1 - 1/C}\right)\ge \frac{1}{4C}$, the proof is complete if we can show the following claim.

\begin{claim}\label{claim:sumB_k} There exists a constant ${C}''$ such that 
\begin{equation}\label{c1}
\sum_{k=0}^{n+1} \int_{B_k} \log\left(\sum_{j=k}^{n}c_{jk}|\xi_j|\right)\text{d} \P\le  C''.
\end{equation}
\end{claim}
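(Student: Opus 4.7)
The plan is to apply Jensen's inequality (concavity of $\log$) on each event $B_k$ separately, converting the expectation of a logarithm into the logarithm of an expectation that can be evaluated by a closed-form binomial identity, and then to sum the resulting bound in $k$ using the geometric decay of $\P(B_k)$.

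Concretely, I would begin by writing, via Jensen applied to the conditional measure $\P(\cdot \mid B_k)$,
$$\int_{B_k}\log\Bigl(\sum_{j=k}^{n}c_{jk}|\xi_j|\Bigr)\,d\P \ \le\ \P(B_k)\log\!\left(\frac{1}{\P(B_k)}\,\E\Bigl[\sum_{j=k}^{n}c_{jk}|\xi_j|\,\mathbf{1}_{B_k}\Bigr]\right).$$
To bound the inner expectation I would use $\E|\xi_j| \le (\E\xi_j^{2})^{1/2} = 1$ together with the observation that for $j > k$, $\xi_j$ is independent of $B_k$ (which is measurable with respect to $\xi_0,\dots,\xi_k$), so $\E[|\xi_j|\,\mathbf{1}_{B_k}] = \E|\xi_j|\cdot\P(B_k) \le \P(B_k)$. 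The single term $j = k$ is handled separately by Cauchy--Schwarz, via $\E[|\xi_k|\,\mathbf{1}_{|\xi_k|>c}] \le (1-q)^{1/2}$, producing only a harmless extra factor $(1-q)^{-1/2}$. This yields
$$\E\Bigl[\sum_{j=k}^{n}c_{jk}|\xi_j|\,\mathbf{1}_{B_k}\Bigr] \ \le\ \frac{\P(B_k)}{\sqrt{1-q}}\sum_{j=k}^{n}c_{jk}.$$

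The next step is to evaluate $\sum_{j\ge k}c_{jk}$ in closed form. Since $c_{jk} = \binom{j}{k}R^{j-k}$ and $1-R = 1/(2C)$, the negative binomial identity $\sum_{j\ge k}\binom{j}{k}R^{j-k} = (1-R)^{-(k+1)}$ gives $\sum_{j=k}^{n}c_{jk} \le (2C)^{k+1}$. Substituting into the Jensen bound, the contribution of $B_k$ becomes at most
$$\P(B_k)\bigl[(k+1)\log(2C) + O_q(1)\bigr].$$

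Finally, since $\P(B_k) = (1-q)q^k$ with $q \le q_1 < 1$, both $\sum_{k\ge 0}\P(B_k) \le 1$ and $\sum_{k\ge 0}k\,\P(B_k) = q/(1-q)$ are absolutely convergent, and summing in $k$ produces a finite bound $C'' = C''(C, q)$, which depends only on $C$ and on the distribution of $\xi$ (through $q$, and hence only through $\ep$ and the moment bound $T$). The only (minor) obstacle is the $j = k$ summand, where $\xi_k$ is not independent of $B_k$; this is cleanly dispatched by the Cauchy--Schwarz step above, and is the only place where the geometric factor $\P(B_k)$ needs to be compared against the conditional distribution of $\xi_k$.
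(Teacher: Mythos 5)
Your proof is correct and takes a genuinely different route from the paper's. The paper bounds $\int_{B_k}\log X_k\,d\P$ (with $X_k=\sum_{j\ge k}c_{jk}|\xi_j|$ and $Z_k=\E X_k$) by a dyadic decomposition: it introduces the level sets $B_{ki}=\{\omega\in B_k: e^iZ_k\le X_k\le e^{i+1}Z_k\}$, bounds $\P(B_{ki})\le e^{-i}$ via Markov, splits the integral at the threshold $i_0=\lfloor-\log\P(B_k)\rfloor$, and sums the dyadic tail. You instead apply Jensen's inequality directly to the conditional law $\P(\cdot\mid B_k)$, and then use the independence structure to compute $\E[X_k\mid B_k]$ in closed form: since $B_k$ is $\sigma(\xi_0,\dots,\xi_k)$-measurable, the summands with $j>k$ factor cleanly, and the single summand $j=k$ (where $c_{kk}=1$) is handled by Cauchy--Schwarz to give $\E[|\xi_k|\mathbf{1}_{B_k}]\le\P(B_k)/\sqrt{1-q}$, so that $\E[X_k\mid B_k]\le(1-q)^{-1/2}\sum_{j\ge k}c_{jk}\le(1-q)^{-1/2}(2C)^{k+1}$ by the same negative-binomial identity both arguments use. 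This bypasses the dyadic sum and the choice of $i_0$ entirely and produces the cleaner per-$k$ estimate $\int_{B_k}\log X_k\,d\P\le\P(B_k)\left[(k+1)\log(2C)+O_q(1)\right]$, which then sums against $\P(B_k)=(1-q)q^k$ exactly as you say. The trade-off is that your Jensen step exploits independence of $\xi_j$ from $B_k$ for $j>k$ explicitly, while the paper's Markov argument does not need this; but the paper already uses independence to evaluate $\P(B_k)$, so nothing is lost, and your version is both shorter and slightly tighter (it drops the extra $-\log\P(B_k)$ term that appears in the paper's bound).
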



\begin{proof}[Proof of Claim \ref{claim:sumB_k}] Let $X_k = \sum_{j=k}^{n}c_{jk}|\xi_j|$, where we recall $c_{jk}$ from \eqref{eqn:c_{jk}}, and let $Z_k = \E X_{k}$. Then
\begin{eqnarray}\label{c2}
Z_k\le {\E|\xi|}\sum_{j=k}^{\infty}c_{jk} = \frac{\E|\xi|}{(1-R)^{k+1}}\le \frac{1}{(1-R)^{k+1}} = (2C)^{k+1}.
\end{eqnarray} 
Let $B_{ki} = \{\omega\in B_k: e^{i}Z_k\le X_k\le e^{i+1}Z_k\}$. 


Then $\P(B_{ki})\le e^{-i}$ by Markov's inequality. Let $i_0 = \lfloor-\log P(B_k)\rfloor$, then

\begin{eqnarray}
&&\int_{B_k}\log X_k\text{d}\P \le \P(B_k\setminus B_{ki_0})\log \left(e^{i_{0}}Z_k\right)+\sum_{i = i_{0}}^{\infty} \int_{B_{ki}}\log X_k\text{d}\P\nonumber\\
&\le&\P(B_k)\log \left(\frac{Z_k}{\P(B_k)}\right)+\sum_{i = i_{0}}^{\infty} \log \left(e^{i+1}Z_k\right)e^{-i}\nonumber\\
&\le& \P(B_k)\Big((k+1)\log (2C)-\log \P(B_k)\Big) + (k+1)\log (2C)\sum_{i = i_{0}}^{\infty}e^{-i}+\sum_{i = i_{0}}^{\infty} (i+1)e^{-i}\quad\text{by \eqref{c2}}\nonumber\\
&\le&\P(B_k)\Big((k+1) C'-\log \P(B_k)\Big) + (k+1) C' e^{-i_0}+ C'(i_0+1)e^{-i_0} \nonumber\\
&\le& \P(B_k)\Big ((k+1) C'-\log \P(B_k)\Big ) + (k+1) C'\P(B_k)+ C'(1-\log \P(B_k))\P(B_k) \nonumber\\
&\le& C'\P(B_k)\Big(k+1-\log \P(B_k)\Big)\nonumber.
\end{eqnarray}

Thus, 
\begin{eqnarray}
\sum_{k=0}^{n+1} \int_{B_k} \log\left(\sum_{j=k}^{n}c_{jk}|\xi_j|\right)\text{d}\P&\le& C'\sum_{k=0}^{n+1} q^{k}(1-q)(k+1 - k\log q - \log(1-q))\nonumber\\
&\le& C'\sum_{k=0}^{\infty} q_1^{k}(k+1 - \log(1-q_1)) + C'\left(\log\frac{1}{q} \right)\sum_{k=0}^{\infty}kq^{k}\nonumber\\
&\le& C' + C'\left(\log\frac{1}{q} \right)\frac{q}{(1-q)^{2}} \nonumber\\
&\le& C' + C'\left(\log\frac{1}{q_{1}} \right) \frac{q_1}{(1-q_1)^{2}}\nonumber.
\end{eqnarray}
This proves (\ref{c1}) and completes the proof.

\end{proof}


%
%
%
%
%
%
%
%
%
%
%
%
%
\subsection{Justification of \eqref{Kac11-1}}
Let $C_0$ as in the proof of Theorem \ref{theorem:main1}. By Remark \ref{rmk1}, 

$$| \E N_n I - \E N_{n, N(0,1)} I | \le  {C_0}^{-\ep}\le 1 ,$$

where $I$ is any subinterval of  $[1-\frac{1}{C_0}, 1]$. 

Let $C$ be any number greater than $C_0$, and let $I = [1-\frac{1}{C_0}, 1 - \frac{1}{C})$, then 

$$| \E N_nI - \E N_{n, N(0,1)} I | \le 1.$$

Combining this with the bound in \eqref{Kac11-0} for $C_0$, we obtain

$$\E N_n [0, 1-\frac{1}{C})\le \E N_{n, N(0,1)}I + M(C_0) + 1\le \E N_{n, N(0,1)}[0, 1-\frac{1}{C}) + M(C_0) + 1.$$

Now, by the Edelman-Kostlan formula (see \cite{EK}), 

\begin{eqnarray}   
\E N_{n, N(0,1)}  [0, 1-\frac{1}{C})  &=& \frac{1}{\pi}\int_{0}^{1-\frac{1}{C}}   \sqrt { \frac{1}{(1 - x^2)^2} - \frac{(n+1)^2 x^{2n} }{ (1 - x^{2n+2})^2 } } \text{d} x\nonumber\\
&\le& \frac{1}{\pi}\int_{0}^{1-\frac{1}{C}}   \frac{1}{1 - x^2} \text{d} x = \frac{1}{2\pi}\left(\log C + \log \left(2-\frac{1}{C}\right)\right)\nonumber\\
&\le& \frac{1}{2\pi}\log C + 1\nonumber.
 \end{eqnarray} 
 
Thus, 

$$\E N_n [0, 1-\frac{1}{C})\le\frac{1}{2\pi}\log C + M(C_0) + 2.$$

This proves \eqref{Kac11-1}.

\end{document}